\documentclass[a4paper]{article}
\usepackage[english]{babel}
\usepackage{pgfplots}\pgfplotsset{compat=1.18}
\usepackage{float}
\usepackage[T1]{fontenc}
\usepackage[utf8]{inputenc}
\usepackage{amsfonts}
\usepackage[affil-it]{authblk}
\usepackage{algpseudocode}
\usepackage{paralist}
\usepackage{caption}
\usepackage{appendix}
\usepackage{verbatim}
\usepackage{wrapfig}
\usepackage{tikz-cd} 
\usepackage{tikz}
\usetikzlibrary{plotmarks}
\usepackage{caption}
\usepackage{amsmath}
\usepackage[arrow, matrix, curve]{xy}
\usepackage{mathtools}

\usepackage[x11names]{xcolor}
\usepackage{amssymb}
\usepackage{wasysym}
\usepackage{listings}
\usepackage[gen]{eurosym}
\usepackage{amssymb,bbm}
\usepackage{stmaryrd}
\usepackage{graphicx}
\usepackage{mathrsfs}
\usepackage{pst-func}
\newcommand{\dd}{\mathrm{d}}

\DeclareMathOperator{\Sym}{\R^{\textit{n}\times \textit{n}}_{\mathrm{Sym}}}
\DeclareMathOperator{\Symtwo}{\R^{2\times 2}_{\mathrm{Sym}}}

\DeclareMathOperator{\tr}{\mathrm{tr}}
\DeclareMathOperator{\Symthree}{\R_{\mathrm{Sym}}^{3\times 3}}
\DeclareMathOperator{\id}{\mathrm{id}}
\DeclareMathOperator{\R}{\mathbb{R}}
\DeclareMathOperator{\N}{\mathbb{N}}
\DeclareMathOperator{\diag}{\mathrm{diag}}
\DeclareMathOperator{\cof}{\mathrm{cof}}

\usepackage{enumitem}
\usepackage{tabularx}
\usepackage{amsthm}

\usepackage{tikz}
\usetikzlibrary{decorations, decorations.text}

\newtheorem{Theorem}{Theorem}[section]
\newtheorem{corollary}[Theorem]{Corollary}
\newtheorem{lemma}[Theorem]{Lemma}
\newtheorem{Bem}[Theorem]{Remark}
\newtheorem{Proposition}[Theorem]{Proposition}
\newtheorem{Definition}[Theorem]{Definition}

\usepackage{fancyhdr}
\usepackage{datetime}
\renewcommand{\sectionmark}[1]{\markboth{~#1}{}}

\fancypagestyle{main}{
\fancyhf{}
\fancyhead[LE]{\footnotesize\nouppercase{\sectionmark}}
\fancyhead[RO]{\footnotesize \nouppercase{\sectionmark}}
\fancyfoot[LE,RO]{\thepage}

}
\renewcommand\subsectionmark[1]{}

\usepackage{csquotes}
\usepackage[backend = biber, sorting=nyt, style =numeric-comp, giveninits=true,
  maxbibnames=99]{biblatex}

\usepackage[breaklinks]{hyperref}

\begin{document}

\title{\bfseries On the symmetric Morrey problem in three dimensions}

%\author[1]{{Georg Dolzmann}}
\author[1]{{Florian Gammer}}

\affil[1]{\footnotesize Faculty of Mathematics, University of Regensburg, Universitätsstraße 31
93053 Regensburg, Germany}

\date{26\textsuperscript{th} October 2026}

\maketitle

\begin{abstract}
We study the relations between seven generalized notions of convexity for continuous integrands in the space of symmetric $3\times 3$ matrices. Here, symmetric quasiconvexity is defined through symmetrized gradients, and symmetric rank-one convexity through symmetrized rank-one lines.
We construct a symmetric quasiconvex integrand that is not polyconvex at symmetric matrices, and an integrand that is both symmetric rank-one convex and Hessian quasiconvex but not symmetric quasiconvex. We also establish a transfer principle that converts smooth rank-one convex functions that lack Hessian quasiconvexity into symmetric rank-one convex integrands while preserving 
this lack of of Hessian quasiconvexity. Applying this scheme to already existing counterexamples yields the existence of symmetric rank-one convex integrands that are not Hessian quasiconvex. Together with known examples, these results determine all pairwise implications among the seven notions considered here.
\end{abstract}

\textbf{MSC (2020): } 49J45, 26B25, 74B05
\smallskip

\textbf{Keywords:} generalized notions of convexity, linearized elasticity, polynomials, polyconvexity, lower semicontinuity, symmetric gradients

\section{Introduction}

Variational models of geometrically linear elasticity involve
energies of the form
$$\mathbb E(u)=\int_\Omega f(\varepsilon(u))\dd x,
\qquad
\varepsilon(u):=\frac12(Du+Du^T),$$
where $\Omega\subset\mathbb R^3$ is a bounded Lipschitz domain
and $f\colon\Symthree\to\mathbb R$ is a continuous
energy density. Dependence on the symmetric gradient reflects
invariance under infinitesimal rigid motions. Under standard
$p$-growth and coercivity assumptions, symmetric quasiconvexity of $f$
provides the sequential weak lower semicontinuity of $\mathbb E$ (cf.~\cite{morrey1952quasi}) needed for
the direct method, while Korn's inequality yields coercivity
on the admissible set
\cite{barroso2000relaxation,fonseca1999cala,ebobisse2000lower,rindler2011lower}.

Writing $\pi_S(A):=(A+A^T)/2$, symmetric quasiconvexity (SQC) of $f$
is equivalent to ordinary quasiconvexity of $f\circ\pi_S$.
The same construction (see \cite{Kreisbeck19}) defines symmetric polyconvexity (SPC) and
symmetric rank-one convexity (SRC), giving the implications
\begin{align*}
    &f \text{ is convex } \Rightarrow f \text{ is SPC }
\Rightarrow      f \text{ is SQC } \Rightarrow f \text{ is SRC.}
\end{align*}
Related notions arise by restricting ordinary polyconvex
functions \cite{Ball1976} to symmetric matrices (PCS), testing quasiconvexity only
with scalar Hessians (HQC) (as introduced in \cite{meyers1965quasi}, see also \cite{fusco1980quasiconvexity,guidorzi1999lower}), or requiring convexity only along
directions $a\otimes a$ (RCS), giving the analogous chain of implications
\begin{align*}
    &f \text{ is convex } \Rightarrow f \text{ is PCS }
\Rightarrow      f \text{ is HQC } \Rightarrow f \text{ is RCS.}
\end{align*}
The corresponding properties satisfy (see Fig.~\ref{Fig 1: Diagram Semi-conveixty})
$$\mathrm{SPC}\Longrightarrow\mathrm{PCS},
\qquad
\mathrm{SQC}\Longrightarrow\mathrm{HQC},
\qquad
\mathrm{SRC}\Longrightarrow\mathrm{RCS}.$$
Understanding which of these implications can be reversed is a natural problem
in the calculus of variations. Although quasiconvexity and
polyconvexity are known to be distinct in general, the question whether
rank-one convexity implies quasiconvexity, posed by Morrey,
is considerably more difficult to answer. \v{S}ver\'ak
\cite{vsverak1992rank} constructed the first finite-valued counterexample
in $\mathbb R^{3\times2}$, Grabovsky
\cite{grabovsky2018microstructure} later obtained a different
construction in $\mathbb R^{8\times2}$, and Agazzi-Bruno-Guerra--Pasqualotto \cite{agazzi2026morreysproblemmathbbr2times} recently identified a connection between
these examples.
Also recently, Cassese
\cite{cassese2026solutionmorreysproblemmathbbr2times}
constructed a homogeneous quartic counterexample in
$\mathbb R^{2\times3}$, leaving $\mathbb R^{2\times2}$
as the only unresolved nontrivial matrix dimension
for the classical Morrey problem.

These questions are closely connected with the characterization
of generalized convexity notions (cf., e.g., \cite{Kreisbeck19,kristensen1999non,conti2003polyconvexity,grabovsky2019rank,harris2018two,muller1999rank,muller2003convex,martin2017rank,ball1981null,faraco2003quasiconvex,faraco2008tartar,tartar1979compensated})
the computation of relaxed energies (cf., e.g., \cite{GamKre2026,DolGam2026,KoStrII,allaire1993explicit,Kohn91,Firoozye93,dolzmann2000two,conti2013relaxation,conti2016relaxation,grabovsky1994bounds,grabovsky1995nonsmooth,antimonov2016phase,fechte2016lamination}),
and the geometry of microstructures (cf., e.g., \cite{ball1989fine,chenchiah2008relaxation,dolzmann1995microstructures,grabovsky2018microstructure,sebestyen2015laminates,grabovsky1995microstructures}). 
Further insight has come from recent numerical investigations
\cite{guerra2021numerical}, results for homogeneous isotropic
integrands \cite{KristensenGuerra2022,KristensenKirchheim2016}, and approaches involving
Burkholder integrands and heat-flow monotonicity
\cite{astala2012burkholder,
bruno2026quasiconvexitydacorognamarcellinienergy}.

Restricting these interrelations to symmetric matrices provides insights into the general case (see outlook) and is directly relevant to elasticity theory, specifically to geometrically linear elasticity and, via frame-indifferent lifting (cf.~\cite{ConDolDon2026}), also to nonlinear elasticity. We review the already established relations in
Section~\ref{Sec: Symmetric notions} and construct counterexamples
that resolve the remaining pairwise implications among the
seven properties considered here.

\subsubsection*{Main results}
Our first result separates symmetric quasiconvexity from polyconvexity at symmetric matrices.
\begin{Theorem}\label{Thm: SQC no PCS} Let $n\ge 2$. There exists a function $f_{1}\colon \Sym\to \R$ that is symmetric quasiconvex but fails to be polyconvex at symmetric matrices. 
\end{Theorem}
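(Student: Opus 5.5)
The plan is to separate the two notions through \emph{hulls of compact sets}, using a zero-set obstruction to PCS. Throughout, $M(A)$ denotes the vector of all minors of $A$, and I use the characterization of PCS from Section~\ref{Sec: Symmetric notions}: $f$ is PCS iff there is a convex $G$ on the space of minors with $f(A)=G(M(A))$ for all $A\in\Sym$.

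\textbf{Key obstruction.} Suppose $f$ is PCS and $\nu$ is a probability measure supported on $\Sym$ with $\langle \nu, M\rangle = M(A)$ for some $A\in\Sym$. Then Jensen's inequality for $G$ gives
$$f(A)=G(M(A))\le \int G(M)\,\dd\nu=\int f\,\dd\nu .$$
So if $f\ge 0$ and $f=0$ on $\operatorname{supp}\nu$, then $f(A)=0$. Hence it suffices to find the following data:
\begin{itemize}
\item a compact set $K\subset\Sym$;
\item a symmetric matrix $A_0$ that is a \enquote{symmetric polyconvex combination} of $K$, i.e.\ $M(A_0)$ lies in the convex hull of $M(K)$;
\item a nonnegative SQC function $f_1$ with $f_1=0$ on $K$ and $f_1(A_0)>0$.
\end{itemize}
A natural candidate is the symmetric quasiconvex envelope of $\operatorname{dist}^2(\cdot,K)$, which is SQC and vanishes exactly on the symmetric quasiconvex hull $K^{\mathrm{sqc}}$. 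The statement then reduces to exhibiting $K$ with $K^{\mathrm{pcs}}\supsetneq K^{\mathrm{sqc}}$ inside $\Sym$.

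\textbf{Reduction to $n=2$.} I would first settle $n=2$ and then embed. For $X\in\Sym$ write $X'$ for its upper-left $2\times 2$ block, and set
$$f(X)=f_1(X')+|X-X'\oplus 0|^2 .$$
SQC passes to this sum, because the symmetrized gradient of a map $u$ in the first two variables (with the other variables frozen) is again a symmetrized gradient, and the remaining term is convex. The failure of PCS also transfers, since the two-dimensional measure $\nu$ above lifts by padding with zeros.

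\textbf{Choice of $K$ in $\Symtwo$.} Quadratic examples cannot work for $n=2$. A quadratic $q$ is SQC iff $q\ge0$ on $\{\det\le 0\}$, and the S-lemma then yields $\lambda\ge0$ with $q+\lambda\det\ge0$, so $q$ is automatically PCS. This is why one has to pass to hulls (equivalently, to non-quadratic integrands).

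As a concrete first attempt I would take
$$K=\{\pm I,\ \pm\diag(1,-1)\}.$$
The minor vectors $(X,\det X)$ of the pairs average to $(0,1)$ and $(0,-1)$, so $M(0)$ lies in the convex hull of $M(K)$, and $A_0=0$ is a symmetric polyconvex combination of $K$.

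\textbf{Main obstacle.} The hard step is to show $0\notin K^{\mathrm{sqc}}$, i.e.\ to produce an explicit SQC function that is nonnegative, vanishes on $K$, and is positive at $0$.
\begin{itemize}
\item My first try is a quartic built from $\det$, $\tr$ and $|X|^2$, tuned to vanish on $K$, checking convexity along all lines $t\mapsto X+t\,\pi_S(a\otimes b)$ together with an upper-bound (Fourier/localization) argument to upgrade this to SQC.
\item Failing that, I would use a compactness/Young-measure argument. The elements of $\{\pm I\}$ are not symmetric-rank-one connected to each other. Moreover, the wells of $K$ have no nontrivial symmetric-rank-one connections inside a neighbourhood that would allow laminates to reach $0$. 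Rigidity for incompatible wells in linearized elasticity (the two-well result of the type in \cite{Kreisbeck19}) would then give that any gradient Young measure supported on $K$ is a Dirac mass, so $K^{\mathrm{sqc}}=K$.
\end{itemize}
If this $K$ turns out to have laminate connections after all, I would perturb it within $\Symtwo$ while keeping $M(0)$ in the convex hull of $M(K)$.
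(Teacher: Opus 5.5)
Your framework is the right one and matches the paper's. You take $f_1=(\operatorname{dist}^2(\cdot,K))^{\mathrm{sqc}}$ and violate Jensen's inequality for PCS with a measure on $K$ whose minors average to those of $A_0$. Your embedding from $n=2$ into higher $n$ is also fine; it replaces the paper's Fubini reduction.

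The proof breaks down, however, exactly at the step you call the main obstacle, and your set $K=\{\pm I,\pm\diag(1,-1)\}$ cannot work. Indeed, $\diag(1,-1)-(-\diag(1,-1))=\diag(2,-2)=2\,(e_1+e_2)\odot(e_1-e_2)$ is a symmetrized rank-one matrix. So $0$ is the midpoint of a compatible segment with endpoints in $K$. Every SRC (a fortiori every SQC) function $f\ge0$ vanishing on $K$ then satisfies $f(0)\le\frac12 f(\diag(1,-1))+\frac12 f(-\diag(1,-1))=0$. Thus $0\in K^{\mathrm{sqc}}$. In fact every pair in $K$ other than $\pm I$ is compatible (e.g.\ $I-\diag(1,-1)=2e_2\otimes e_2$), so the whole square $\{\diag(s,t):\vert s\vert,\vert t\vert\le1\}$ lies in the lamination hull. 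Hence neither the quartic nor the rigidity route can succeed, and the claim $K^{\mathrm{sqc}}=K$ is false. The fallback ``perturb $K$'' is not an argument: producing a $K$ with $A_0$ in the polyconvex hull but provably outside $K^{\mathrm{sqc}}$ is the entire content of the theorem.

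The missing idea is how to certify $A_0\notin K^{\mathrm{sqc}}$ by exploiting the very equality of minor averages. The paper takes $K'=\{\diag(5,-1),\diag(-1,5),\diag(-7,-7)\}\subset\Symtwo$ and $A_0=\mathbb I_2$. The barycenter condition forces the weights $\frac49,\frac49,\frac19$, which give $\sum_i\theta_i\det B_i=1=\det\mathbb I_2$. For periodic $v$ one has $\int_{\mathbb T^2}\det(\mathbb I_2+\varepsilon(v))\,\dd x=1-\frac14\int_{\mathbb T^2}\vert\operatorname{curl}v\vert^2\,\dd x$. So any sequence $\mathbb I_2+\varepsilon(v_n)$ approaching $K'$-valued maps in $L^2$ has determinant averages tending to $1$, which forces $\operatorname{curl}v_n\to0$ in $L^2$. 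The Helmholtz decomposition $v_n=D\psi_n+w_n+c_n$ then turns it into a sequence of Hessians $\mathbb I_2+D^2\psi_n$ approaching $K'$. This reduces the symmetric-gradient problem to a Hessian one, which \v{S}ver\'ak's $f_+$ settles: it is HQC, vanishes on $K'$ (no $B_i$ is positive definite) and equals $1$ at $\mathbb I_2$, contradicting $\int f_+(\mathbb I_2+D^2\psi_n)\,\dd x\ge1$. So the separating ingredients are an HQC but non-PCS function together with the determinant identity. That identity bites precisely because the determinant averages agree. Nothing of this kind appears in your proposal.
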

Notably, we also show that the quadratic form from \cite[Thm.~5.7]{Kreisbeck19}, which is symmetric quasi- but not symmetric polyconvex, is polyconvex at symmetric matrices. Thus the Boussaid--Kreisbeck--Schlömerkemper quadratic separates functions that are symmetric quasiconvex and polyconvex at symmetric matrices from those that are symmetric polyconvex, whereas our example separates symmetric quasiconvex functions from functions that are polyconvex at symmetric matrices.

Additionally, we show abstractly that the counterexamples in \cite[Cor.~5.4]{cassese2026solutionmorreysproblemmathbbr2times}, \cite{ConDolDon2026}, and \cite[Thm.~1.5]{agazzi2026morreysproblemmathbbr2times} can be modified into functions that are symmetric rank-one convex, but not Hessian quasiconvex.
\begin{corollary}\label{Cor: No Hessian Q Convex}
 There exists a $C^2$-function $f_{2}\colon \Symthree\to \R$ that is symmetric rank-one convex but fails to be Hessian quasiconvex at the origin. 
\end{corollary}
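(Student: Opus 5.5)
The plan is to prove a transfer principle: start from a known smooth integrand that is rank-one convex on symmetric matrices in the direction sense relevant to Hessians, but not Hessian quasiconvex. Then add a multiple of a quadratic null Lagrangian for Hessians that is positive on genuinely rank-two symmetrized rank-one directions.

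\emph{Starting function.} Take from \cite[Cor.~5.4]{cassese2026solutionmorreysproblemmathbbr2times}, \cite{ConDolDon2026} or \cite[Thm.~1.5]{agazzi2026morreysproblemmathbbr2times} a $C^2$ function $g\colon\Symthree\to\R$ with two properties:
\begin{itemize}
\item $t\mapsto g(X+ta\otimes a)$ is convex for all $X$ and all $a$, i.e.\ $g$ is RCS;
\item there is a $\varphi\in C_c^\infty(\Omega)$ with $\int_\Omega g(D^2\varphi)\,\dd x<g(0)|\Omega|$.
\end{itemize}
This step may require rewriting their vectorial examples in Hessian form, e.g.\ via the embedding of $\R^{3\times2}$-gradients of $(\partial_1 u,\partial_2 u,\dots)$ used by Šverák; I would check this case by case.

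\emph{Making the rank-one convexity strict and the growth controlled.} Replace $g$ by $g_\varepsilon(X)=g(X)+\varepsilon|X|^2$. For small $\varepsilon$ the strict inequality for the fixed $\varphi$ survives, and now
$$D^2g_\varepsilon(X)[a\otimes a,a\otimes a]\ge 2\varepsilon|a|^4 .$$
Since only the values of $g_\varepsilon$ on the bounded set $\{D^2\varphi(x)\}$ matter for the failure of HQC, I would also modify $g_\varepsilon$ outside a large ball $B_R$ so that $D^2 g_\varepsilon$ is globally bounded. The plan is to glue with a large convex quadratic through a smooth cutoff; because that adds a convex quadratic far out, the gluing should keep the RCS property.

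\emph{The corrector.} Define
$$q(X)=\tfrac12\big((\tr X)^2-\tr(X^2)\big)=\sigma_2(X),$$
and set $f_2=g_\varepsilon-K\sigma_2$ for a constant $K$ to be chosen. Since $\sigma_2$ is a sum of $2\times2$ principal minors, $\int_\Omega\sigma_2(D^2\varphi)\,\dd x=0$ for all $\varphi\in C^\infty_c$, so it is a null Lagrangian on Hessians. Hence $f_2$ still violates Hessian quasiconvexity at the origin, because $\sigma_2(0)=0$. For $M=a\odot b$ one computes
$$-\sigma_2(M)=\tfrac14\big(|a|^2|b|^2-(a\cdot b)^2\big)\ge0,$$
which vanishes exactly when $a\parallel b$, i.e.\ when $M$ is a multiple of some $c\otimes c$. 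Thus, for unit-norm $M$,
$$D^2f_2(X)[M,M]=D^2g_\varepsilon(X)[M,M]+2K|a\wedge b|^2/4 .$$

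\emph{Main obstacle: uniform positivity near $a\parallel b$.} On the compact set of unit symmetrized rank-one directions, the second term is bounded below by $cK$ away from the rank-one set. Near the rank-one set, $D^2g_\varepsilon(X)[M,M]\ge\varepsilon$ by continuity. The continuity must be uniform in $X$, which is why global boundedness (indeed uniform continuity) of $D^2g_\varepsilon$ from the gluing step is needed. With $\delta$ chosen from that uniform continuity and then $K$ large, we get $D^2f_2(X)[M,M]\ge0$ for all $X$ and all $M$, i.e.\ $f_2$ is SRC. Ensuring the global modification preserves both the $C^2$ regularity and the uniform lower bound on $a\otimes a$ directions is the delicate point. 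If the cutoff gluing breaks the bound, I would instead compose with a convex, eventually quadratic, increasing function.
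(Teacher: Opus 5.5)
Your architecture is the paper's. The perturbation $g+\varepsilon|X|^2$ is the same, and your corrector $-K\sigma_2$ is the paper's $\lambda q$ with $q=-2\tr\cof=-2\sigma_2$. The Hessian null-Lagrangian property and the identity $-\sigma_2(a\odot b)=\tfrac14(|a|^2|b|^2-(a\cdot b)^2)$ are correct. So is the compactness dichotomy on unit compatible directions: near $\pm c\otimes c$ use the $\varepsilon$-margin, away from them use $K$. These are exactly the ingredients of the proof of Thm.~\ref{Thm: Scheme for transfering RCS to SRC}, which the paper then applies to the cited seeds. The paper takes those seeds to be RCS, non-HQC integrands on symmetric matrices already, so no case-by-case rewriting is needed. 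The one structural difference is the order. You globalize $g_\varepsilon$ first and then add the corrector on all of $\Symthree$. The paper instead chooses $\lambda$ by compactness on $\overline{B(0,2R)}$ for the unmodified $f_\epsilon$, and only localizes afterwards.

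The step you leave open is a genuine gap as written. In your order it is essential: for a quartic seed $D^2g_\varepsilon$ grows like $|X|^2$, so your $\delta$ would shrink and $K$ would blow up as $|X|\to\infty$. An interpolation $\chi g_\varepsilon+(1-\chi)Q$ need not be rank-one convex on the annulus where $D\chi\neq0$. There the second derivative contains
$$2\,D\chi(X)[M]\,D(g_\varepsilon-Q)(X)[M]+(g_\varepsilon-Q)(X)\,D^2\chi(X)[M,M],$$
which has no sign, and convexity of $Q$ far out does not control it. Composing, i.e.\ $\Phi\circ g_\varepsilon$ with $\Phi$ convex and increasing, keeps RCS but cannot bound the Hessian of a quartic seed.

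The fix, which is what the paper does, is to add rather than interpolate. Take $R\ge\sup|D^2\varphi|$, a cutoff $\chi$ with $\chi\equiv1$ on $\overline{B(0,2R)}$ and $\chi\equiv0$ off $B(0,3R)$, and $C_0$ with $D^2(\chi g_\varepsilon)\ge-C_0$. Let $h(X)=\rho(|X|)$, where $\rho''=\zeta\in[0,1]$, $\zeta\equiv0$ on $[0,R]$, $\zeta\equiv1$ on $[\tfrac32R,\infty)$. Then $h$ is convex everywhere, vanishes on $\overline{B(0,R)}$, has $|D^2h|\le1$, and satisfies $D^2h\ge\tfrac14$ off $B(0,2R)$. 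Set $\tilde g=\chi g_\varepsilon+4(C_0+\varepsilon)h$. It agrees with $g_\varepsilon$ on $\overline{B(0,R)}$, which contains $0$ and the range of $D^2\varphi$. It satisfies $D^2\tilde g(X)[a\otimes a,a\otimes a]\ge\varepsilon|a|^4$ for all $X$, and its Hessian is bounded.

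That boundedness is all your last step needs; uniform continuity in $X$ is not required, because
$$\bigl|D^2\tilde g(X)[M,M]-D^2\tilde g(X)[M_0,M_0]\bigr|\le\|D^2\tilde g\|_\infty\,|M-M_0|\,|M+M_0|.$$
Alternatively, keep the paper's order: choose $K$ on the compact ball and cut off afterwards. Outside the ball the corrector contributes $-2K\sigma_2(M)\ge0$ on compatible $M$, so it cannot hurt, and no global information about the seed is ever needed.
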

More generally, we prove the following scheme using techniques from \cite{kristensen1999non}.
\begin{Theorem}\label{Thm: Scheme for transfering RCS to SRC}
    Let $n\ge 2$, let $f\in C^2(\Sym,\R)$ be rank-one convex at symmetric matrices. Suppose there is an $A_0\in \Sym$ and a $\psi\in C^\infty(\mathbb T^n,\R)$ such that 
    $$\int_{\mathbb T^n} f(A_0+D^2 \psi)-f(A_0)\dd x<0. $$
    Then there exists a symmetric rank-one convex $\hat{f}\in C^2(\Sym,\R)$ having bounded Hessian such that $\hat{f}(A_0)=f(A_0)$ and
   $$\int_{\mathbb T^n} \hat{f}(A_0+D^2 \psi)-\hat{f}(A_0)\dd x<0. $$
\end{Theorem}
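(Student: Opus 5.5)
The plan is to follow the Kristensen-type localisation. Only the finitely many values $A_0+D^2\psi(x)$ matter, so we modify $f$ away from a compact set and add a large convex-type correction in the non-$a\otimes a$ symmetric rank-one directions. Since $\psi$ is smooth and periodic, the set $K:=\{A_0+D^2\psi(x): x\in\mathbb T^n\}\cup\{A_0\}$ is compact, and the strict inequality is stable under small $C^0$ perturbations of the integrand near $K$. We will construct
$$\hat f(A):=\eta(A)f(A)+ \varepsilon\,\Phi(A)+k\,g(A)+c,$$
with $\eta$ a cutoff equal to $1$ on a neighbourhood of $K$, $\Phi$ a convex quadratic making the cutoff harmless, and $g$ a quadratic form such that $g(D^2\varphi)$ integrates to zero over the torus (a null Lagrangian for Hessians). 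Because $g$ is a null Lagrangian for Hessians, adding it leaves the integral $\int_{\mathbb T^n}\hat f(A_0+D^2\psi)-\hat f(A_0)\dd x$ unchanged. Meanwhile $g$ should have positive curvature in all symmetrized rank-one directions $a\odot b$ that are not parallel to some $c\otimes c$.

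First step: choose $g(A)=\sum_{i<j}(a_{ii}a_{jj}-a_{ij}^2)$-type combinations of $2\times2$ minors, or more generally $g(A)=\mu\,\sigma_2(A)$ up to sign. These are quadratic null Lagrangians on Hessians, so $\int_{\mathbb T^n} g(A_0+D^2\psi)-g(A_0)\dd x=0$. For $M=a\odot b$ the quadratic form evaluates to $-\sigma_2(a\odot b)=\tfrac14(|a|^2|b|^2-(a\cdot b)^2)\ge0$, since $a\odot b$ has eigenvalues $\tfrac12(a\cdot b\pm|a||b|)$. This vanishes exactly when $a\parallel b$, i.e.\ when $a\odot b$ is of the form $\pm c\otimes c$. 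So $h(M):=-\sigma_2(M)$ is a quadratic form with $h(a\odot b)\ge c_0\,|a\wedge b|^2$, and $\hat f$ will contain $k\,h$.

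Second step (the main obstacle): control $D^2\hat f(A)[a\odot b,a\odot b]$ when $a$ and $b$ are nearly parallel. Then $h$ gives almost nothing, while $f$ is only convex along exactly $c\otimes c$. Decompose $a\odot b=\lambda\, c\otimes c + E$ with $|E|\lesssim |a\wedge b|$, $c$ a unit vector, where $a\odot b$ is normalised to norm $1$. On $\{\eta=1\}$ we get
$$D^2 f[a\odot b]\ge D^2f[c\otimes c]\lambda^2-C(|\lambda||E|+|E|^2)\ge -C|E|,$$
with $C$ the bound of $D^2 f$ on a compact neighbourhood of $K$. This linear error is not dominated by $k|E|^2$. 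I would overcome this as in \cite{kristensen1999non}: use $D^2f[c\otimes c]\ge 0$ together with a Cauchy–Schwarz-type absorption. This requires uniform strict convexity along $c\otimes c$, which we enforce by replacing $f$ with $f+\delta|\cdot|^2$. This preserves the strict negativity of the integral for small $\delta$, and then
$$\lambda^2(\delta+\ldots)-C|\lambda||E|+k|E|^2\ge0$$
holds once $k\ge C^2/(4\delta)$.

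Third step: handle the cutoff region, where $\eta f$ has bounded but indefinite Hessian. Add $\varepsilon\Phi$, where $\Phi$ is a convex function equal to $0$ near $K$, with $D^2\Phi\ge C'\,\mathrm{Id}$ outside a larger neighbourhood and bounded Hessian. This keeps $\hat f=f+\delta|\cdot|^2+kh$ on $K$, which also gives $\hat f(A_0)=f(A_0)$ after adjusting the constant $c$ and $\delta$-term by an affine null correction. Globally $D^2\hat f$ is then bounded and $\hat f$ is SRC on all of $\Sym$. The $\delta|\cdot|^2$ term changes the integral by $\delta\int|D^2\psi|^2$, which is absorbed by the strict inequality. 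This completes the plan.
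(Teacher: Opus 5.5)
Your proposal is correct and is essentially the paper's proof. Both cut off $f+\delta|\cdot|^2$, add a large multiple of the Hessian null Lagrangian $-\sigma_2=-\operatorname{tr}\cof$ (the paper's $q/2$), and add, with a large coefficient, a radial convex correction $\rho(|A|)$ vanishing on the range of $A_0+D^2\psi$. The one difference is in the nearly parallel directions: you absorb the error by an explicit Young-type estimate, whereas the paper argues by compactness, using a neighbourhood $\mathbb U$ of the $\pm a\otimes a$ directions where $D^2 f_\epsilon\ge\epsilon$ and the positive minimum of $q$ on its complement.
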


We also explain that symmetric rank-one convex polynomials of degree at most $3$ are already symmetric quasiconvex, and then construct a function from a quartic seed where this implication fails. We finally extend this polynomial to a function on the whole space, using the heat-flow argument from \cite{bruno2026quasiconvexitydacorognamarcellinienergy} to ensure Hessian quasiconvexity, yielding our last theorem. 
\begin{Theorem}\label{Thm: No SQC}
    There exists a smooth function $f_{3}\colon \Symthree\to \R$ with bounded Hessian that is symmetric rank-one convex and Hessian quasiconvex but fails to be symmetric quasiconvex at the origin. 
\end{Theorem}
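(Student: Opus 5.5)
We construct $f_3$ in three stages: a quartic polynomial seed $P$ on $\Symthree$, a localization that turns $P$ into a function with bounded Hessian while keeping symmetric rank-one convexity, and a heat-flow regularization that gives Hessian quasiconvexity without destroying the failure of symmetric quasiconvexity at the origin.

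\textbf{The quartic seed.} Since $f\circ\pi_S$ is quasiconvex if and only if $f$ is SQC, and rank-one convex polynomials of degree at most $3$ are already quasiconvex, the seed must be quartic. We look for a homogeneous quartic $P$ on $\Symthree$ such that $P\circ\pi_S$ is rank-one convex on $\R^{3\times 3}$ but not quasiconvex. Concretely, $P(\pi_S(a\otimes b))\ge 0$ should hold for all $a,b\in\R^3$, while
\[
\int_{\mathbb T^3}P(\varepsilon(\varphi))\dd x<0
\]
for some periodic $\varphi$. The natural source for $P$ is \v{S}ver\'ak's $3\times 2$ mechanism, or Cassese's quartic $2\times 3$ example, embedded into symmetrized gradients. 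The cubic term $\det$-type coupling $xyz$ on the three-dimensional space of diagonal entries is realised by a periodic displacement whose symmetric gradient ranges over the diagonal plane, as in \v{S}ver\'ak's construction. We then add $\varepsilon|\,\cdot\,|^4$-type and off-diagonal quartic penalties to obtain rank-one convexity along all symmetrized rank-one directions $\pi_S(a\otimes b)$. Since $\pi_S(a\otimes b)$ has rank at most $2$ and signature $(+,-)$ or is semidefinite, this is a finite-dimensional positivity check for a biquadratic form in $(a,b)$.

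To keep the Hessian-quasiconvexity requirement separate, we want the test field to be $\varepsilon(\varphi)$ with $\varphi$ not a gradient. The failure then lives in the genuinely symmetric-gradient part.

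\textbf{Localization.} Next we replace $P$ by a function with bounded Hessian that agrees with $P$ near the origin, in the spirit of the proof of Theorem~\ref{Thm: Scheme for transfering RCS to SRC}. We take
\[
g=P+\lambda\, h,
\]
where $h$ grows quadratically, and then form $\hat g$ as the symmetric rank-one convex envelope or a Kristensen-type truncation. This keeps $\hat g=P$ on a neighbourhood of the range of $\varepsilon(t\varphi)$ for small $t$. Homogeneity of $P$ lets us rescale $\varphi$ so that the negative integral persists. Symmetric rank-one convexity is then preserved because the construction of Theorem~\ref{Thm: Scheme for transfering RCS to SRC} applies, with SRC of the seed replacing RCS.

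\textbf{Heat flow for HQC.} Finally, following \cite{bruno2026quasiconvexitydacorognamarcellinienergy}, we write $f_3=\hat g+\mu\,\Phi$, or more precisely evolve $\hat g$ by a heat-type flow adapted to Hessians. Monotonicity of $\int \hat g(A+D^2\psi)$ along the flow shows nonnegativity for all Hessian test fields. The key structural input is that on Hessians $D^2\psi$ the seed reduces to a function of a scalar potential. Rank-one convexity along $a\otimes a$, combined with the quartic structure, should then give HQC, for instance via a Fourier/Plancherel computation on the quadratic part plus a null-Lagrangian correction for the quartic part. We must check that the perturbation needed is small enough to keep the symmetric-gradient integral negative.

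\textbf{Main obstacle.} The hardest step is the simultaneous positivity requirement. The quartic must be nonnegative on all Hessian oscillations (HQC) and on symmetrized rank-one directions, yet negative on some symmetric-gradient oscillation. I would first try diagonal-plus-shear quartics whose restriction to Hessian-compatible Fourier multipliers $\xi\otimes\xi$ is a sum of squares plus null Lagrangians, such as $\cof$ minors of $D^2\psi$. The symmetric-gradient multipliers $\xi\odot\eta$ would then escape this certificate.
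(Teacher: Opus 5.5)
You have a plan rather than a proof. The two places where the real content lies, the seed and the mechanism giving Hessian quasiconvexity, are either missing or point in directions that fail.

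\textbf{The seed.} The positivity condition you state is not symmetric rank-one convexity. For a homogeneous quartic, $P(a\odot b)\ge 0$ only encodes convexity along compatible lines through the origin, since $D^2P(tB)[B,B]=12t^2P(B)$. SRC requires $D^2P(A)[a\odot b,a\odot b]\ge 0$ at every base point $A$.

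The \v{S}ver\'ak mechanism on diagonal matrices also does not survive symmetrization, for two reasons. First, $\diag(1,-1,0)=(e_1+e_2)\odot(e_1-e_2)$ is a compatible direction, and $xyz$ has second derivative $-2z$ along it. This is exactly the kind of rank-two symmetrized direction that, as the paper shows, invalidates Pati's cubic. Second, $\diag(\cos x_1,\cos x_2,\cos(x_1+x_2))$ is not the symmetric gradient of any periodic field: integrating $\partial_3u_3$ over a period in $x_3$ would force $\cos(x_1+x_2)=0$.

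The paper avoids both problems by working on the three-dimensional Pati subspace $\mathbb L=S(\R^3)$ from \eqref{eq: Pati subspace}. This subspace contains the genuine symmetric gradient $S(\cos 2\pi x,\cos 2\pi y,\cos 2\pi(x+y))$. The paper classifies the compatible directions inside $\mathbb L$, including the rank-two family $a+b=0$, and checks an explicit non-homogeneous quartic only along those directions (Lemma~\ref{Lem: quartic Pati polynomial}).

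\textbf{The HQC step.} The heat flow of \cite{bruno2026quasiconvexitydacorognamarcellinienergy} runs on the test function $\psi$, not on the integrand. It reduces HQC to pointwise nonnegativity of $\sum_k D^2f_3(H)[\partial_kH,\partial_kH]$, where the $\partial_kH$ are slices of the fully symmetric tensor $X_{ijk}=\partial_i\partial_j\partial_k\psi$. Rank-one convexity along $a\otimes a$ plus quartic structure cannot deliver this in general: Cassese's integrand \cite{cassese2026solutionmorreysproblemmathbbr2times} is RCS but not HQC.

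The missing idea is a decoupling. Extend $g$ from $\mathbb L$ by $f_3(A)=g(P_{\mathbb L}A)+\lambda\vert Q_{\mathbb L}A\vert^2$, with $P_{\mathbb L},Q_{\mathbb L}$ the orthogonal projections onto $\mathbb L$ and $\mathbb L^\perp$.
\begin{itemize}
\item Full symmetry of $X$ forces $P_{\mathbb L}\partial_kH$ to equal the compatible direction $X_{kkk}\,e_k\otimes e_k$ for $k=1,2$, and $0$ for $k=3$, up to an error bounded linearly by $\vert Q_{\mathbb L}\partial_kH\vert$. A large $\lambda$ then absorbs that error and makes the dissipation nonnegative.
\item The same large $\lambda$, combined with a compactness argument on compatible unit directions, gives SRC off $\mathbb L$.
\item Because $Q_{\mathbb L}\varepsilon(u)\equiv 0$, enlarging $\lambda$ never touches the negative integral.
\end{itemize}
So the competition you flag at the end, that the perturbation must stay small, never arises in the paper. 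In your scheme, with a seed on all of $\Symthree$ and an unspecified correction $\mu\Phi$, it remains unresolved.

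Your localization step is the routine part and works as in Theorem~\ref{Thm: Scheme for transfering RCS to SRC}. The SRC-envelope alternative you mention would not in general be smooth with bounded Hessian.
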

As a note, another counterexample to the implication SRC $\implies $ SQC has been constructed independently by Conti, Dolzmann and Dondl \cite{ConDolDon2026}, in order to obtain a frame-indifferent counterexample to the ordinary Morrey conjecture in $\R^{3\times 3}$. Our construction is different in the polynomial seed, the localization and extension procedure, and the additional Hessian quasiconvexity. 

In addition, we give a detailed overview of all known implications and non-implications between the seven notions of convexity (cf.~Fig.~\ref{Fig 1: Diagram Semi-conveixty}). This paper, together with known results, settles the general pairwise implications between these.

\subsubsection*{Outline}

To avoid terminological confusion, we recall the necessary symmetric notions in the next Section \ref{Sec: Symmetric notions}. We also present the previously established interrelations between these notions, i.e., where implications hold, and where implications fail. In Section \ref{Sec: SQC does not imply PCS}, we prove Thm.~\ref{Thm: SQC no PCS}. 
In Section \ref{Sec: A scheme for SRC (Hesian QC)}, we establish our transfer scheme.
In Section \ref{Sec: On the strong symmetric Morrey problem} we discuss Morrey's problem in $\Symthree$ explicitly and present a counterexample based on a quartic polynomial, and finally, in the outlook Section, we discuss the potential impact of these findings.

\subsubsection*{Notation}
We write $\Symthree$ for the space of real symmetric
$3\times 3$ matrices, denote such matrices by letters $A,B,E,G,H$, and set
$$a\odot b:=\frac12(a\otimes b+b\otimes a).
$$
The Frobenius inner product and norm are denoted by
$A:B=\operatorname{tr}(A^TB)$ and $\vert A\vert=(A:A)^{1/2}$.
We write $\mathbb I$ for the identity matrix, indicating dimensions other than 3 by an index, and write
$\varepsilon(u):=\frac{1}{2}(Du+(Du)^T)$ for the symmetric gradient.
For scalar functions $\psi$, $D^2\psi$ denotes the Hessian,
for integrands $f$, $D^2f(A)[B,B]$ denotes the second
directional derivative at $A$ in direction $B$.
We use $\mathbb T^n=\mathbb R^n/\mathbb Z^n$, which we identify with $(0,1)^n$, equipped with the
Lebesgue measure. We write $C^\infty(\mathbb T,\R^d)$ for the space of $\mathbb Z^n$ periodic smooth function $v\colon \R^n\to \R^d$.
The notation $f^{\mathrm{sqc}}$ denotes the
symmetric quasiconvex envelope of $f$.

\subsubsection*{Acknowledgments}
The research for this paper has been conducted mostly during a research stay of the author at the University of Oxford. The author thanks Jan Kristensen, Gabriele Cassese, Georg Dolzmann, and Carolin Kreisbeck for many insightful conversations.

This work was funded by the Deutsche Forschungsgemeinschaft (DFG). We gratefully acknowledge the support within the project SPP 2256
(``Variational Methods for Predicting Complex Phenomena in Engineering Structures
and Materials''), project ID 441468770.

\subsubsection*{AI disclosure}

All of the three main results and their proofs have been conducted by the author. 
The author used ChatGPT-6 Astra
to find and fine tune the parameters for the quartic in Lemma.~\ref{Lem: quartic Pati polynomial}, and to investigate \v Sver\'ak's example $f_+$ and obtain the polyconvex measure where polyconvexity of $f_+$ fails (i.e., Lemma \ref{lem: HQC no PCS}). 
 Other than that, AI was only used to review a previous draft of this paper. The AI model is not an author of this work.

This paper has been written completely and checked
entirely by the author, who takes full responsibility for its contents.

\section{Symmetric notions of semi-convexity}
\label{Sec: Symmetric notions}

We recall the definitions and fix the terminology used below,
distinguishing the notions associated with symmetric gradients
from those associated with scalar Hessians.
\begin{Definition}
    A continuous function $F\colon \R^{3\times 3}\to \R$ is called  
    \begin{itemize}
        \item[(i)] polyconvex, if there exists a convex function 
    $$\hat{F}\colon \R^{3\times 3} \times \R^{3\times 3}\times\R\to \R  $$
    such that 
    $F (A)=\hat{F}(A,\cof A,\det A)$ for all $A\in \R^{3\times 3}$,
    \item[(ii)] quasiconvex, if for all $v\in C^\infty(\mathbb T^3,\R^3)$ and all $A\in \R^{3\times 3}$ there holds
    $$F(A)\le \int_{\mathbb T^3}F(A+D  v)\dd x ,$$
    \item[(iii)] rank-one convex, if for all $A,B\in \R^{3\times 3}$ with $\mathrm{rk}(A-B)=1$, and all $t\in (0,1)$, there holds
    $$F(A+t(B-A))\le (1-t)F(A)+ t F(B).$$
    \end{itemize}
\end{Definition}
Requiring $F=f\circ \pi_S$ to be quasiconvex leads to the following definitions.
\begin{Definition}
    A continuous function $f\colon \Symthree\to \R$ is called \emph{symmetric polyconvex} (SPC) (resp.~\emph{symmetric quasiconvex (SQC), symmetric rank-one convex (SRC), symmetric convex}), if the function $F=f\circ \pi_S$ is polyconvex (resp.~quasiconvex, rank-one convex, convex). 
\end{Definition}
Writing out these conditions leads to the following simple characterization.
\begin{lemma}[cf.{\cite[Prop.~2.2]{Kreisbeck19}}]
    A continuous function $f\colon \Symthree \to \R$ is 
    \begin{itemize}
        \item[(i)] \emph{symmetric} convex, if and only if $f$ is convex,
        \item[(ii)] \emph{symmetric} polyconvex, if and only if there is a jointly convex function $\hat{f}\colon \Symthree\times \Symthree\to \R$, non-increasing with respect to the second order minors (in the Loewner sense), such that $f(A)=\hat{f}(A,\cof A)$ for all $A \in \Symthree$
        (\cite[Thm.~4.1 \& Thm.~5.1]{Kreisbeck19}, \cite{Merabet24}),
        \item[(iii)] \emph{symmetric} quasiconvex, if and only if for all $A\in \Symthree$ there holds
        $$f(A)\le \inf_{v\in C^{\infty}(\mathbb T^3,\R^3)}\int_{\mathbb T^3}f(A+\varepsilon(v))\dd x ,$$
        \item[(iv)] \emph{symmetric} rank-one convex, if and only if for all $t\in [0,1]$, $A,B\in \Symthree$, such that $A-B=a\odot b$ for some $a,b\in \R^3$, there holds
        $$f(tA+(1-t)B)\le tf(A)+(1-t)f(B).$$
    \end{itemize}
\end{lemma}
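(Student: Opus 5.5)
The plan is to unwind each definition for $F=f\circ\pi_S$ and use two structural facts: $\pi_S$ is a linear surjection onto $\Symthree$ that is the identity there, and $\pi_S$ maps rank-one matrices onto the symmetric products $a\odot b$.

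For (i), if $f$ is convex then $F=f\circ\pi_S$ is convex, since $\pi_S$ is linear. Conversely, if $F$ is convex, its restriction to the linear subspace $\Symthree$ is convex, and $F=f$ there.

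For (iii), I would use $\pi_S(A+Dv)=A+\varepsilon(v)$ for every $A\in\Symthree$. Quasiconvexity of $F$ at symmetric $A$ is therefore exactly the stated inequality. For a general $M\in\R^{3\times3}$ we have $F(M+Dv)=f(\pi_S M+\varepsilon(v))$ and $F(M)=f(\pi_SM)$. So the quasiconvexity inequality at $M$ reduces to the one at $\pi_S M$, and the two conditions are equivalent.

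For (iv), take $M,N\in\R^{3\times3}$ with $M-N=a\otimes b$. Then $\pi_SM-\pi_SN=a\odot b$, and $F$ along the segment $[M,N]$ coincides with $f$ along $[\pi_SM,\pi_SN]$. This gives the backward implication. For the forward one, take $A,B\in\Symthree$ with $A-B=a\odot b$ and set $M=B+a\otimes b$, $N=B$. Then $M-N$ has rank at most one and $\pi_S$ maps $[M,N]$ onto $[A,B]$. The case $a\otimes b=0$ is trivial.

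For (ii), the nontrivial direction is the one the statement attributes to the cited works. The easy direction is that a representation $f(A)=\hat f(A,\cof A)$, with $\hat f$ jointly convex and monotone, yields polyconvexity of $F$. To see this, I would write $F(M)=\hat f(\pi_SM,\cof\pi_SM)$ and express $\cof(\pi_S M)$ as the symmetric part of $\cof M$ minus a positive semidefinite quadratic expression in the skew part $W=(M-M^T)/2$. Concretely, the second-order minors of $S+W$ differ from those of $S$ by $w\otimes w$-type terms, i.e. $\pi_S(\cof(S+W))=\cof S+w\otimes w$, with $w$ the axial vector of $W$. The key step is then the following. Since $\hat f$ is non-increasing in its second argument in the Loewner order, and $w\otimes w\succeq 0$,
\[
F(M)=\sup_{P\succeq 0,\,\text{affine}}\bigl[\,\cdots\,\bigr].
\]
In more detail: the map $(M,C)\mapsto \hat f(\pi_SM,\pi_S C - w\otimes w)$ is convex, because $-w\otimes w$ composed with a non-increasing convex function gives $\hat f(\cdot,\cdot - w\otimes w)$ with $w\mapsto w\otimes w$ Loewner-convex. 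This produces a convex function of $(M,\cof M)$.

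The converse, that polyconvexity of $F$ forces such a representation, is the main obstacle. Here I would simply invoke \cite[Thm.~4.1 \& Thm.~5.1]{Kreisbeck19} and \cite{Merabet24}, as the statement indicates: there the $\det$ dependence is eliminated and the convex representative is shown to be monotone in the cofactor argument.
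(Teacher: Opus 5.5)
Your proposal is correct and follows the paper's approach. The paper offers no proof beyond the remark that these are obtained by ``writing out'' the definitions, and your treatment of (i), (iii) and (iv) does exactly that, using linearity and surjectivity of $\pi_S$ together with $\pi_S(a\otimes b)=a\odot b$; like the paper, you defer the necessity direction of (ii) to \cite{Kreisbeck19} and \cite{Merabet24}. The sufficiency argument you add for (ii) goes beyond what the paper writes and is valid: it rests on $\pi_S(\cof M)=\cof(\pi_S M)+w\otimes w$, with $w$ the axial vector of the skew part of $M$, on Loewner-concavity of $(M,C)\mapsto \pi_S C-w\otimes w$, and on the monotonicity of $\hat f$. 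You should, however, delete the unfinished display with the supremum, since it plays no role in the argument.
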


Testing with Hessians instead of symmetrized gradients leads to the following different notions.
\begin{itemize}
    \item[(i)]  \emph{Polyconvex at symmetric matrices (PCS)}. This means that there is an ordinary polyconvex function $F\colon \R^{3 \times 3}\to \R$, such that $f=F_{\vert_{\Symthree}}$. An example is the determinant function, which is not symmetric polyconvex. 
    \item[(ii)] \emph{Hessian quasiconvexity (HQC)}, also called $2$-quasiconvexity, meaning that for all $\psi\in C^\infty(\mathbb T^3,\R)$ and for all $A\in \Symthree$, there holds
    $$f(A)\le\int_{\mathbb T^3} f(A+D^2 \psi)\dd x.$$
    This is a weaker notion than symmetric quasiconvexity, as the determinant again shows. 
    For smooth strictly Hessian quasiconvex integrands satisfying $(p>1)$-growth and a Lipschitz condition, it is well-known that they can be extended to ordinary quasiconvex functions on the whole space (see \cite{dal2004higher} and for earlier work \cite{muller2003convex}).
    \item[(iii)]
    \emph{Rank-one convex at symmetric matrices (RCS)}, i.e., for all $A\in \Symthree$ and for all $a\in \R^3$, the function $\R\to \R$, $f(A+ta\otimes a)$ is convex in $t$. Crucially, these are precisely the lines of ordinary rank one among the symmetrized rank-one lines $a\odot b$. Consequently, being rank-one convex at symmetric matrices is a weaker property than 
symmetric rank-one convexity, strictly, in fact, as the determinant again shows.
\end{itemize}

We essentially have seven different notions of symmetric semi-convexity (including regular convexity). We now give an overview, which notion implies which other notions, and where reverse implications fail. 

The left chain of implications in Fig.~\ref{Fig 1: Diagram Semi-conveixty} is immediate from the ordinary counterpart, a proof of which can be found in \cite[Ch.~3,5]{Daco08}. 
The right chain of implications is well-known: the first implication is trivial, the second, polyconvexity at symmetric matrices implies Hessian quasiconvexity, follows from \cite[Thm.~5.3]{ball1981null}, and the third, Hessian quasiconvexity implies rank-one convexity at symmetric matrices, from \cite[Thm.~3.3]{ball1981null}.

The implications from the left to the right have been assessed. Crucially, all reversals of these implications fail. For instance, $-\tr(\cof(\cdot))$ is an SPC function that is not convex (see \cite{Kreisbeck19}). The determinant is a polyconvex (and therefore HQC and RCS) function that is neither convex, SPC, SQC, nor SRC. 

An SQC function that is not SPC was presented in \cite[Thm.~5.7]{Kreisbeck19}, and a function that is rank-one convex on symmetric matrices but not HQC was recently constructed in \cite[Cor.~5.4]{cassese2026solutionmorreysproblemmathbbr2times} and \cite[Thm.~1.5]{agazzi2026morreysproblemmathbbr2times}.

\begin{figure}
\begin{center}
\begin{tikzcd}[
    row sep=2cm, 
    column sep=5.5cm, 
    ampersand replacement=\&,s
    every arrow/.append style={thick, {Stealth[scale=1.5]}}
]
\text{Convex} 
    \arrow[r,leftrightarrow] 
    \arrow[d, shift left=1.5] 
    \& 
\text{Convex at Sym} 
    \arrow[d, shift right=1.5] 
\\
\text{SPC} 
    \arrow[u, dotted, shift left=1.5, "-\text{tr\,cof}"] 
    \arrow[r, shift left=0.0 ] 
    \arrow[d, shift left=1.5] 
    \& 
\text{PCS} 
    \arrow[u, dotted, shift right=1.5, "\text{det}"'] 
    \arrow[l, dotted, shift right=2, "\text{det}"' ] 
    \arrow[d, shift right=1.5] 
    \arrow[ld, dotted, shift right=2.0, "\text{Thm.~\ref{Thm: SQC no PCS}}" ] 
\\
\text{SQC} 
    \arrow[u, dotted, shift left=1.5, "\text{\cite[Thm.~5.7]{Kreisbeck19}}"] 
    \arrow[r, shift left=2 ] 
    \arrow[d, shift left=1.5] 
    \arrow[ur, dotted, shift left=0.0, "\text{det}" ] 
    \& 
\text{HQC} 
    \arrow[u, dotted, shift right=1.5, "\text{\cite{vsverak1992new}}"'] 
    \arrow[l, dotted, shift right=0.0, "\text{det}" ] 
    \arrow[d, shift right=1.5] 
    \arrow[ld, dotted, shift right=2, "\text{Cor.~\ref{Cor: No Hessian Q Convex}}" ]
\\
\text{SRC} 
    \arrow[u, dotted, shift left=1.5, "\text{Thm.~\ref{Thm: No SQC}}"] 
    \arrow[r, shift left=0.0 ] 
    \arrow[ur, dotted, shift left=0.0, "\text{det}" ] 
    \& 
\text{RCS} 
    \arrow[u, dotted, shift right=1.5, "\text{\cite[Cor.~5.4]{cassese2026solutionmorreysproblemmathbbr2times}}"'] 
    \arrow[l, dotted, shift left=2.0, "\text{det}" ]
\end{tikzcd}
\end{center}
\caption{Relation between symmetric notions of semi-convexity. Arrows indicate an implication, dotted arrows indicate failure of the associated implication, with the corresponding label giving a counterexample.}
    \label{Fig 1: Diagram Semi-conveixty}
\end{figure}

\begin{Bem}\begin{itemize}
    \item[a)] The requirement that $f$ be continuous can be relaxed to $f$ being finite-valued, Borel measurable, and locally bounded.
    \item[b)] In dimension five, extending the construction in \cite{vsverak1992rank}, there is a simple counterexample for the implication SRC $\implies$ SQC by considering the three-dimensional subspace
$$\begin{pmatrix}
    0 & 0 &  x & 0 & z \\
    0& 0  & 0 & y & z\\
    x & 0 & 0 & 0 & 0 \\
    0 & y & 0 & 0 & 0 \\
    z & z & 0 & 0 & 0 
\end{pmatrix}.$$
\end{itemize}
\end{Bem}

\section{SQC does not imply PCS}
\label{Sec: SQC does not imply PCS}

We now investigate whether symmetric quasiconvexity implies
polyconvexity at symmetric matrices. First, we show that the
quadratic example of \cite[Thm.~5.7]{Kreisbeck19} is PCS,
and therefore does not disprove this implication.
The example reads
$$f_\eta(A):=(A_{12}-A_{13})^2+(A_{13}-A_{23})^2+(A_{23}-A_{12})^2+A_{11}^2+A_{22}^2+A_{33}^2-\eta \vert A\vert^2,$$
where 
$$\eta:=\min_{a,b\in \R^3, \vert a\odot b\vert=1}f_0(a\odot b).$$
\begin{Proposition}
    The function $f_\eta$ is polyconvex at symmetric matrices. 
\end{Proposition}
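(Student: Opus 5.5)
The plan is to exhibit an explicit polyconvex $F\colon\R^{3\times3}\to\R$ with $F|_{\Symthree}=f_\eta$. Since $f_\eta$ is a quadratic form, I will look for $F$ of the form
$$F(A)=Q(A)+\sum_{i<j}\lambda_{ij}\, m_{ij}(A),$$
where $Q$ is a convex quadratic form on $\R^{3\times3}$ and each $m_{ij}$ is a null Lagrangian, i.e.\ a linear combination of $2\times2$ minors. Such an $F$ is polyconvex, because it is a convex function of $(A,\cof A)$.

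The restriction to symmetric matrices must be chosen correctly. For symmetric $A$, the minors satisfy
$$A_{11}A_{22}-A_{12}^2=(\cof A)_{33},$$
and likewise for the other diagonal cofactors. So I add multiples $\mu_k$ of the diagonal cofactors $(\cof A)_{kk}$, which on $\Symthree$ equal $A_{ii}A_{jj}-A_{ij}^2$. The task then reduces to a finite-dimensional condition: find $\mu_1,\mu_2,\mu_3$ such that
$$f_\eta(A)-\sum_k\mu_k\,(\cof A)_{kk}$$
extends to a convex quadratic on all of $\R^{3\times3}$. Equivalently, it should be positive semidefinite on $\Symthree$, since a PSD quadratic form on $\Symthree$ extends to a PSD form on $\R^{3\times3}$ via $A\mapsto q(\pi_S A)$. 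By symmetry of $f_\eta$ under permutations of coordinates, I take $\mu_1=\mu_2=\mu_3=\mu$.

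The form $f_\eta$ decouples into a diagonal block and an off-diagonal block. Subtracting $\mu\sum_k(\cof A)_{kk}$ adds
$$-\mu(A_{11}A_{22}+A_{11}A_{33}+A_{22}A_{33})+\mu(A_{12}^2+A_{13}^2+A_{23}^2).$$
The two blocks then become:
\begin{itemize}
\item Diagonal block: $(1-\eta)\sum_iA_{ii}^2-\mu\sum_{i<j}A_{ii}A_{jj}$, with eigenvalues $1-\eta-2\mu$ (along $(1,1,1)$) and $1-\eta+\mu$.
\item Off-diagonal block: for $x=(A_{12},A_{13},A_{23})$, it is $2|x|^2-2(x_1x_2+x_2x_3+x_1x_3)-2\eta|x|^2+\mu|x|^2$, with eigenvalues $-2\eta+\mu$ (along $(1,1,1)$) and $3-2\eta+\mu$. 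The factor $2$ in front of $\eta$ comes from the Frobenius norm counting each off-diagonal entry twice.
\end{itemize}
Positivity of all four eigenvalues requires
$$2\eta\le\mu\le\tfrac{1-\eta}{2},$$
which is feasible if and only if $\eta\le 1/5$.

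The main obstacle is therefore computing or bounding $\eta$, the minimum of $f_0$ over normalized symmetrized rank-one matrices $a\odot b$, and checking that $\eta\le1/5$. I would first test candidates. Taking $a\odot b=e_1\odot e_2$ gives $|a\odot b|^2=1/2$ and $f_0=2\cdot\tfrac14\cdot2=1$, so after normalisation the value is $2$, which is too large. Taking $A=a\otimes a$ with $a=(1,1,1)/\sqrt3$ gives $|A|=1$, $A_{ij}=1/3$ for all $i,j$, and $f_0=3/9=1/3$. So $\eta\le1/3$, which is not yet good enough.

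I would then minimise over general $a,b$. A good guess is $a,b$ with $a\odot b$ having the all-equal off-diagonal structure and a small diagonal, for example $a=(1,1,1)$ and $b=(1,\omega,\omega^2)$-type real analogues. If the true value of $\eta$ exceeds $1/5$, I would fall back to allowing unequal $\mu_k$. Failing that, I would add further null Lagrangians that vanish on $\Symthree$, namely the antisymmetric combinations of off-diagonal cofactor entries. These change the extension's convexity off $\Symthree$ but not the restriction. I would then solve the resulting semidefinite feasibility problem explicitly, using the permutation symmetry to block-diagonalize it.
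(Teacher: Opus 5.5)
Your strategy is sound, but the decisive computation contains a factor-two error, and because of it the proposal stops short of a proof. The diagonal block $(1-\eta)\vert d\vert^2-\mu\sum_{i<j}A_{ii}A_{jj}$, with $d=(A_{11},A_{22},A_{33})$, is represented by the matrix $(1-\eta)\mathbb I-\tfrac{\mu}{2}(J-\mathbb I)$, where $J$ is the all-ones matrix. It is not represented by a matrix with off-diagonal entries $-\mu$. Its eigenvalues are therefore $1-\eta-\mu$ along $(1,1,1)$ and $1-\eta+\mu/2$ on the complement. You can check this by evaluating at $d=(1,1,1)$, which gives $3(1-\eta)-3\mu$. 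Your off-diagonal eigenvalues are correct. The wrong diagonal values lead you to the spurious requirement $\eta\le\tfrac15$, which you do not establish, since your test only yields $\eta\le\tfrac13$. The proposal then ends with fall-backs that are never carried out, so the key inequality is left open.

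The repair is immediate. The correct feasibility condition is $2\eta\le\mu\le 1-\eta$, which is solvable if and only if $\eta\le\tfrac13$. Your own test matrix $a\otimes a$ with $a=(1,1,1)/\sqrt3$ already gives this bound. Concretely, take $\mu=2\eta$ and use the identity $\vert A\vert^2=(\tr A)^2-2\tr\cof A$ on $\Symthree$:
\[
f_\eta(A)=\bigl(f_0(A)-\eta(\tr A)^2\bigr)+2\eta\,\tr\cof A,\qquad f_0(A)-\eta(\tr A)^2\ge(1-3\eta)\vert d\vert^2\ge 0 .
\]
The first quadratic is nonnegative, hence convex. Composing it with $\pi_S$ and adding $2\eta\,\tr\cof A$ gives a polyconvex extension of $f_\eta$.

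This route differs from the paper's. The paper uses the off-diagonal cofactor combination $A_0:\cof A$, where $A_0$ has zero diagonal and unit off-diagonal entries. That combination cancels the coupling $-2(o_1o_2+o_1o_3+o_2o_3)$ but creates a cross term $-2d\cdot o$. The paper therefore needs $2(1-\eta)^2\ge 1$, and hence the sharper bound $\eta\le\tfrac14$ obtained from $\tfrac1{\sqrt8}(1,1,2)\odot(1,1,0)$. Your corrected argument uses $\tr\cof$, the same null Lagrangian as $q$ in Section 4, and needs only the cruder bound from a genuine rank-one matrix. Incidentally, $\eta\le\tfrac15$ does hold: $(1,1,1+\sqrt5)\odot(1,1,0)$ gives the value $\tfrac{3-\sqrt5}{4}$. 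It is simply not needed.
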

\begin{proof}
    We first establish an upper bound on $\eta$. This can easily be done by testing with the matrix
    $$A_1:=\frac{1}{\sqrt{8}}\begin{pmatrix}
        1 & 1 & 1\\ 
        1 & 1 &1 \\
        1 & 1 &0
    \end{pmatrix}=\frac{1}{\sqrt{8}}(1,1,2)\odot (1,1,0).$$
    We see that $f_0(A_1)=\frac{1}{4}$, hence $\eta\le \frac{1}{4}$. 
Now, let us use the (full-rank) matrix
$$A_0=\begin{pmatrix}
        0 & 1 & 1\\ 
        1 & 0 &1 \\
        1 & 1 &0
    \end{pmatrix}.$$
    Let us also write $d=(A_{11},A_{22},A_{33})$ for the diagonal and $o=(A_{23},A_{13},A_{12})$ for the off-diagonal part of a matrix $A$. 
Then
$$A_0:\cof A=2(o_1o_2+o_1o_3+o_2o_3)-2d:o,$$
and
$$f_\eta(A)=(1-\eta)\vert d\vert^2+2(1-\eta)\vert o\vert^2-2(o_1o_2+o_1o_3+o_2o_3).$$
We can split up $f_\eta$ using
$$h(A):=f_\eta(A)+A_0:\cof(A)=(1-\eta)\vert d\vert^2+2(1-\eta)\vert o\vert^2-2d:o.$$
Setting $t=1-\eta$ and completing the square gives
$$ h(A)=t\biggl\vert d-\frac{1}{t} o\biggl\vert^2+\biggl(2t-\frac{1}{t}\biggr)\vert o\vert^2. $$
By our bound on $\eta$, we infer
$$t\ge \frac{3}{4}, \quad \text{and} \quad 2t-\frac{1}{t}\ge \frac{3}{2}-\frac{4}{3}=\frac{1}{6}\ge 0.$$
Hence, the quadratic form $h$ is nonnegative and therefore convex. Thus, $f_\eta$ is PCS as a sum of $-A_0:\cof$ and a convex function.
\end{proof}
Let us now continue with our discussion of an SQC function that fails to be polyconvex. 
Consider \v{S}ver\'ak's function 
\begin{equation}
    f_+(\varepsilon):=\begin{cases}
        \det \varepsilon &\text{if } \varepsilon \text{ is positive definite,}\\
        0 &\text{otherwise,}
    \end{cases} \label{eq: Def f+}
\end{equation}
treated in \cite{vsverak1992new}. It is not SQC (actually not SRC, take e.g.~$
f_+\bigl(\diag(1+t,1-t,1)\bigr)=1-t^2$), but
it is an HQC function that is not  PCS. We present a short argument for this.
\begin{lemma}\label{lem: HQC no PCS}
    The function $f_+\colon \Symthree\to \R$ is Hessian quasiconvex but fails to be polyconvex at symmetric matrices. 
\end{lemma}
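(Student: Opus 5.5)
The proof has two independent parts. Hessian quasiconvexity should follow from a Monge--Amp\`ere type covering argument (essentially \v{S}ver\'ak's). Failure of polyconvexity at symmetric matrices should follow from an explicit finitely supported measure on $\Symthree$ that satisfies the minor relations but violates Jensen's inequality for $f_+$.

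\emph{Hessian quasiconvexity.} Fix $A\in\Symthree$ and $\psi\in C^\infty(\mathbb T^3,\R)$. If $A$ is not positive definite, then $f_+(A)=0\le f_+(A+D^2\psi)$ pointwise, since $f_+\ge 0$, and there is nothing to prove. If $A$ is positive definite, set $u(x):=\tfrac12 x\cdot Ax+\psi(x)$ on $\R^3$.
\begin{itemize}
\item Since $\psi$ is bounded, $u$ grows quadratically. Hence for every $p\in\R^3$ the function $x\mapsto p\cdot x-u(x)$ attains its maximum at some $x_p$. There $Du(x_p)=p$ and $D^2u(x_p)\ge 0$ by the second-order condition.
\item So $Du$ maps the closed set $K:=\{D^2u\ge 0\}$ onto $\R^3$.
\item Periodicity gives $Du(x+k)=Du(x)+Ak$ and $K+k=K$ for $k\in\mathbb Z^3$. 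Therefore $Du(K\cap[0,1]^3)$ meets every translate of a fundamental domain of the lattice $A\mathbb Z^3$, whose volume is $\det A$.
\item By the area formula,
$$\int_{\mathbb T^3} f_+(A+D^2\psi)\,\dd x\ \ge\ \int_{K\cap(0,1)^3}\det D^2u\,\dd x\ \ge\ \bigl|Du(K\cap(0,1)^3)\bigr|\ \ge\ \det A=f_+(A).$$
\item Points of $K$ where $D^2u$ is only semidefinite contribute $\det D^2u=0=f_+(D^2u)$, so they cause no problem.
\end{itemize}

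\emph{Failure of PCS.} If $f_+=F|_{\Symthree}$ with $F$ polyconvex, Jensen's inequality for the convex representative $\hat F$ yields the following. For every probability measure $\nu=\sum_i\lambda_i\delta_{B_i}$ with atoms $B_i\in\Symthree$ and
$$\sum_i\lambda_iB_i=A,\qquad \sum_i\lambda_i\cof B_i=\cof A,\qquad \sum_i\lambda_i\det B_i=\det A,$$
we must have $f_+(A)\le\sum_i\lambda_i f_+(B_i)$. I will therefore look for such a measure with $A=\mathbb I$ (so $f_+(A)=1$) and $\sum_i\lambda_if_+(B_i)<1$. Cofactors of symmetric matrices are symmetric, so all constraints stay inside $\Symthree$.

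The natural ingredients are indefinite atoms with positive determinant, e.g.\ permutations of $\diag(-s,-s,r)$ with $s,r>0$. They carry determinant $s^2r>0$ while contributing $f_+=0$, which lets the measure supply the required determinant "for free". I would use a permutation-invariant ansatz of the following form:
\begin{itemize}
\item three atoms $\diag(-s,-s,r)$ and its permutations, each with weight $\mu/3$;
\item a positive definite diagonal atom, or a second invariant family of diagonal atoms, carrying the remaining weight;
\item off-diagonal atoms such as $\pm$ rotated copies, added if needed to adjust the cofactor average.
\end{itemize}
Permutation invariance reduces the constraints to three scalar equations: trace, sum of principal $2\times2$ minors, and determinant.

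The main obstacle is solving this polynomial system with a gain. A single invariant family forces $s<0$ (a direct check gives $s=-1$), so at least one further atom with free parameters is needed, possibly atoms with nonzero off-diagonal entries. I would first try two diagonal families, leaving a few free parameters, and optimize $\sum\lambda_if_+(B_i)$ numerically. I would then pick rational values to obtain an exact certificate with $\sum_i\lambda_if_+(B_i)<1$.
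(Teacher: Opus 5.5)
Your Hessian quasiconvexity argument is fine. It is \v{S}ver\'ak's covering argument, which the paper simply cites. One wording fix: what periodicity actually gives is $Du(K\cap[0,1]^3)+A\mathbb Z^3=\R^3$, i.e.\ the image covers the torus $\R^3/A\mathbb Z^3$. That covering property, not ``meets every translate of a fundamental domain'', is what yields $|Du(K\cap(0,1)^3)|\ge\det A$.

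The genuine gap is in the second half, which is the actual content of the lemma. You set up the correct Jensen test for PCS, but you never produce a measure, so failure of polyconvexity at symmetric matrices is not proved. ``Optimize numerically, then pick rational values'' is a search plan, not a certificate. The missing observation that makes the search trivial is that you need no positive definite atom at all. Aim for $\int f_+\,\dd\mu=0$ and let a negative definite block supply the positive determinant. If you freeze the third diagonal entry at $1$, then $\cof\diag(a,b,1)=\diag(b,a,ab)$ and $\det\diag(a,b,1)=ab$. The cofactor and determinant constraints therefore collapse to the two-dimensional conditions: the mean of $(a,b)$ is $(1,1)$ and the mean of $ab$ is $1$. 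The paper takes
\[
\mu=\tfrac49\delta_{\diag(5,-1,1)}+\tfrac49\delta_{\diag(-1,5,1)}+\tfrac19\delta_{\diag(-7,-7,1)}.
\]
Its barycenter is $\mathbb I$. The cofactors are $\diag(-1,5,-5)$, $\diag(5,-1,-5)$ and $\diag(-7,-7,49)$, with mean $\mathbb I$. The mean determinant is $\tfrac{-40+49}{9}=1$. Every atom is indefinite, so $\int f_+\,\dd\mu=0<1=f_+(\mathbb I)$.

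Your permutation-invariant ansatz is compatible with this certificate. Averaging $\mu$ over conjugation by permutation matrices preserves the barycenter, cofactor and determinant constraints and $f_+$. It produces exactly two invariant diagonal families. The first is the permutations of $(-7,-7,1)$, which is your family with $s=7$, $r=1$ and total weight $\tfrac19$. The second is the permutations of $(5,-1,1)$, with total weight $\tfrac89$. So your plan would have worked, but as submitted the non-PCS part contains no proof.
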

\begin{proof}
The Hessian quasiconvexity of this positive determinant was established in \v{S}ver\'ak \cite{vsverak1992new} (see also \cite{guerra2019extremal}). To see that it is not polyconvex, take
$$A_1=\diag(5,-1,1), \quad A_2=\diag(-1,5,1), \quad A_3=\diag(-7,-7,1),$$
and the measure (convex combination)
$$\mu:=\frac{4}{9}\delta_{A_1}+\frac{4}{9}\delta_{A_2}+\frac{1}{9}\delta_{A_3}.$$
Then
the barycenter of $\mu$ is $\mathbb I$, while
$$\int_{\Symthree}\det\dd\mu= 1=\det(\mathbb I), \quad \text{and} \quad \int_{\Symthree}\cof\dd\mu= \mathbb I=\cof(\mathbb I),$$
but
$$\int_{\Symthree} f_+\dd \mu= 0<1=f_+(\mathbb I).$$
\end{proof}

Although \v{S}ver\'ak's function is not SQC, it can be used to show that there is an SQC function that is not PCS, as we show now.

\begin{proof}[Proof of Thm.~\ref{Thm: SQC no PCS}]
    Let us define $A_1,A_2,A_3$ as in lemma \ref{lem: HQC no PCS}. Let $K=\{A_1,A_2,A_3\}$. Set 
    $$f_{1}(A):=\bigl(\mathrm{dist}^2(A,K)\bigr)^{\mathrm{sqc}}.$$
    Identifying $(0,1)^3$ with $\mathbb T^3$, and using the  periodic Dacorogna formula \cite[Prop.~5.13]{Daco08} (adapted to the function composed with the projection), we obtain
$$f_1(A)
=
\inf_{v\in C^\infty(\mathbb T^3,\mathbb R^3)}
\int_{(0,1)^3}\mathrm{dist}^2(A+\varepsilon(v),K)\dd x.$$
    Necessarily,
    $$0\le f_1(A_i)\le \mathrm{dist}^2(A_i,K)=0$$
    for all $i=1,2,3$, while the same measure $\mu$ violates the Jensen inequality for PCS once we can show that $f_1(\mathbb I_3)>0$. \\
    \textit{1.~2d reduction: }Let us write $\Pi\colon \Symthree\to \Symtwo$ for the projection onto the upper left corner. We set $B_i=\Pi A_i$ and $K'=\{B_1,B_2,B_3\}\subset \Symtwo$. 
    Suppose that there is a $\delta>0$ such that
    for any function $v\in C^\infty(\mathbb T^2,\mathbb R^2)$ there holds
$$\int_{(0,1)^2}\mathrm{dist}^2(\mathbb I_2+\varepsilon(v)(x'),K')\dd x'\ge \delta.$$
    Now, for $u\in C_{\mathrm{per}}^\infty((0,1)^3,\R^3)$, we introduce a family of functions
    $$v_{x_3}:=\bigl(u_1(\cdot,x_3),u_2(\cdot,x_3)\bigr)\in C^\infty(\mathbb T^2,\mathbb R^2).$$
Using 
    $$\mathrm{dist}(A,K)\ge \mathrm{dist}(\Pi A,K'), $$
we infer by Fubini's Theorem
\begin{align*}
    \int_{(0,1)^3} \mathrm{dist}^2(\mathbb I_3+\varepsilon(u)(x),K)\dd x &\ge \int_0^1\int_{(0,1)^2}\mathrm{dist}^2(\mathbb I_2+\varepsilon(v_{x_3})(x'),K')\dd x'\dd x_3\\ 
    &\ge \int_0^1 \delta \dd s=\delta.
\end{align*}
\textit{2.~Vanishing curl in the limit: }Let us assume that $$\bigl(\mathrm{dist}^2(\cdot,K')\bigr)^{\mathrm{sqc}}(\mathbb I_2)=0.$$ Then by the Dacorogna formula, there is a sequence $(v_n)_{n\in \N}\subset C^\infty(\mathbb T^2,\mathbb R^2)$ such that
$$\int_{(0,1)^2}\mathrm{dist}^2(\mathbb I_2+\varepsilon(v_n),K')\dd x \to 0.  $$
For every $n$, let $Z_n\colon (0,1)^2\to K'$ assign to every $x\in (0,1)^2$ the $B_i$ nearest to $\mathbb I_2+\varepsilon(v_n)(x)$ (if this is not unique, choose the $B_i$ with the smallest index). 
By construction, 
\begin{align}
    \Vert \mathbb I_2+\varepsilon(v_n)-Z_n\Vert_{L^2}^2=\int_{(0,1)^2}\mathrm{dist}^2(\mathbb I_2+\varepsilon(v_n),K')\dd x \to 0. \label{eq: L2 convergernce}
\end{align}
Since the $Z_n$ are uniformly bounded in $L^\infty((0,1)^2,\Symtwo)$, the sequence $E_n:=\mathbb I_2+\varepsilon(v_n)$ is bounded in $L^2((0,1)^2,\Symtwo)$. 

Let us now set 
$$\theta_{i,n}:=\vert \{x\in (0,1)^2\mid Z_n(x)=B_i\}\vert.$$
Then by \eqref{eq: L2 convergernce} and $\int_{(0,1)^2} E_n=\mathbb I_2$, we have
$$ \int_{(0,1)^2} Z_n\dd x=\sum_{i=1}^3\theta_{i,n}B_i\to \mathbb I_2  \quad \text{and} \quad \sum_{i=1}^3\theta_{i,n}=1.$$
Restricting to a convergent subsequence, we may assume that the $\theta_{i,n}$ converge to $\theta_i$, the unique solution of the limit of the system of equations 
$$\theta_1=\theta_2=\frac{4}{9}, \theta_3=\frac{1}{9}.$$
Crucially, by the choice of $A_i$, we also get
$$ \int_{(0,1)^2} \det Z_n\dd x=\sum_{i=1}^3\theta_{i,n}\det B_i\to \sum_{i=1}^3\theta_{i}\det B_i=1.$$
By \begin{equation}
    \vert \det A-\det B\vert\le C(\vert A\vert+ \vert B\vert)\vert A-B\vert, \label{eq: Det inequality}
\end{equation}(for some $C>0$),
Hölder's inequality, and
\eqref{eq: L2 convergernce} we deduce
\begin{equation}
\int_{(0,1)^2} \det E_n\dd x\to 1. \label{eq: Det limit}
\end{equation}
Next, we use the operator $\operatorname{curl}v:=\partial_1 v_2-\partial_2 v_1$, which captures the skew-symmetric part of the gradient. Expanding the determinant gives
\begin{align*}
    \int_{(0,1)^2} \det (\mathbb I_2+\varepsilon(v))\dd x&=  \int_{(0,1)^2} 1+\operatorname{div}v+\det Dv-\frac{1}{4}\vert \operatorname{curl}v\vert^2\dd x\\
    &=1-\frac{1}{4}\int_{(0,1)^2}\vert \operatorname{curl}v\vert^2\dd x.
\end{align*}
Thus, for the sequence $(v_n)_{n\in \N}$, using \eqref{eq: Det limit}, we infer
\begin{equation}
    \int_{(0,1)^2}\vert \operatorname{curl}v_n\vert^2\dd x\to 0. \label{eq: curl free limit}
\end{equation}
For each $n\in \N$, there is a Helmholtz decomposition (cf., e.g.~\cite[Lem.~1]{dal2004higher}) of the $\mathbb Z^2$-periodic smooth functions
$$v_n=D  \psi_n+w_n+c_n,$$
where $\psi_n\in C_{\mathrm{per}}^\infty(\mathbb T^2,\R)$, $c_n\in \R^2$, and $w_n\in C_{\mathrm{per}}^\infty(\mathbb T^2,\R^2)$, admitting
$$\int_{(0,1)^2}\psi_n\dd x=0, \quad \operatorname{div}w_n=0, \quad \text{and} \quad \int_{(0,1)^2}w_n\dd x=0. $$
Then, we deduce
\begin{align*}
    \int_{(0,1)^2}\vert Dw_n\vert^2\dd x&= \int_{(0,1)^2}\vert \operatorname{div}w_n\vert^2+\vert \operatorname{curl}w_n\vert^2\dd x=\int_{(0,1)^2}\vert \operatorname{curl}v_n\vert^2\dd x.
\end{align*}Hence, together with \eqref{eq: curl free limit} and \eqref{eq: L2 convergernce}, we conclude
\begin{equation}
    \Vert \mathbb I_2+D^2\psi_n-Z_n\Vert_{L^2}\to 0. \label{eq: Hessian approximation}
\end{equation}
\textit{3.~Comparison with $f_+$: }Lastly, consider the HQC function $f_+$ (for $\Symtwo$ matrices) in \eqref{eq: Def f+}. We have for all $n\in \N$
\begin{equation}
    \int_{(0,1)^2}f_+(\mathbb I_2+D^2\psi_n)\dd x\ge f_+(\mathbb I_2)=1. \label{eq: f+ HQC}
\end{equation} 
This positive determinant admits the same estimate as in \eqref{eq: Det inequality},
\begin{equation*}
    \vert f_+( A)-f_+(B)\vert\le C(\vert A\vert+ \vert B\vert)\vert A-B\vert.
\end{equation*}
If both matrices are positive definite, this is the determinant estimate. If neither is positive definite, the left-hand side vanishes. If precisely one is positive
definite, the segment joining them meets the boundary of the positive-definite
cone at some matrix $M$ with $\det M = 0$.
 
Note that $G_n:=\mathbb I_2+D^2\psi_n$ is also bounded in $L^2$, whence by Hölder's inequality and $L^2$-convergence \eqref{eq: Hessian approximation}, we see that
\begin{align*}
     \int_{(0,1)^2}\vert f_+( G_n)-f_+(Z_n)\vert \dd x&\le C(\Vert G_n\Vert_{L^2}+\Vert Z_n\Vert_{L^2})\Vert G_n-Z_n\Vert_{L^2}\to 0.
\end{align*}
But this is a contradiction to \eqref{eq: f+ HQC}, since $f_+(Z_n)\equiv 0$. Consequently, the two-dimensional infimum, and thus the three-dimensional infimum, are strictly positive, contradicting the Jensen inequality of PCS. Finally, the case $n\ge 4$ works analogously by extending the tensors $A_i$ with ones on the diagonal. 
\end{proof}

\section{Transferring RCS to SRC without HQC}
\label{Sec: A scheme for SRC (Hesian QC)}

Starting from the examples in \cite[Cor.~5.4]{cassese2026solutionmorreysproblemmathbbr2times} and \cite[Thm.~1.5]{agazzi2026morreysproblemmathbbr2times}, which are rank-one convex functions at symmetric matrices without being HQC, we show that these functions can be perturbed in a way that yields SRC functions that are not HQC at the origin. 

The cited examples are RCS but not SRC. Cassese's integrand satisfies
$$h(tB)=(4\delta-1)t^4,
\quad
B=e_1\odot(2e_2),$$
and therefore fails symmetric rank-one convexity for\footnote{Increasing $\delta$ cannot restore symmetric rank-one convexity
while retaining the failure of Hessian quasiconvexity at the origin.
Indeed, for $\delta\ge 1/4$, the estimate $q(A)\ge -\vert A\vert^4/4$
implies $h\ge 0=h(0)$.}
$0<\delta<1/4$.\\
The integrand of Agazzi--Bruno--Guerra--Pasqualotto admits (in their notation)
$$A=X(0,0,-3,1),\quad
B=\operatorname{diag}(1,-1,0)
,\quad F_{\alpha,\lambda,\mu}(A+sB)
=-3s^2+\alpha(12+2s^2)^2,$$
hence being SRC would require $\alpha\ge 1/16$.
However, the inequality
$r_1r_2r_3r_4\leq \vert X(r)\vert^4/16$
then implies $F_{\alpha,\lambda,\mu}\ge 0
=F_{\alpha,\lambda,\mu}(0)$, and hence HQC
at the origin.

Our argument uses an added quadratic Hessian null Lagrangian to enforce
convexity along symmetrized rank-one directions without interfering with
the Hessian quasiconvexity deficit. Our localization construction is based on the one introduced by Kristensen \cite[Prop.~2]{kristensen1999non}.

\begin{proof}[Proof of Thm.~\ref{Thm: Scheme for transfering RCS to SRC}]
Replacing $f(A)$ by $f(A_0+A)-f(A_0)$, we may assume
that $A_0=0$ and $f(0)=0$. Once the construction is complete,
translation of the matrix variable gives the conclusion
at the original base point. We first give the proof in three dimensions.  
We proceed in four steps.\\
    \textit{1.~Preliminaries: }By assumption, there are constants 
    $$-\int_{\mathbb T^3} f(D^2 \psi)\dd x=C_1>0 \quad \text{and} \quad \int_{\mathbb T^3} \vert D^2 \psi\vert^2\dd x=C_2>0.$$
Choose $\epsilon>0$ such that 
\begin{align*}
    \epsilon C_2<\frac{C_1}{2}. 
\end{align*}
Let us define the perturbed functions
$$f_\epsilon(A):=f(A)+\epsilon \vert A\vert^2.$$
Then 
\begin{align}
    \int_{\mathbb T^3} f_\epsilon(D^2 \psi)\dd x < -\frac{C_1}{2}, \label{eq: f eps not HQC}
\end{align}while for every $B =a\otimes a$, $a\in \R^3$,
\begin{align*}
    D^2 f_\epsilon (A)[B,B]\ge 2 \epsilon \vert B \vert^2.
\end{align*}
Let us now introduce the quadratic function $q(A)=-2\tr\cof(A)$. This is an SPC (thus SQC and SRC) function which is (ordinary) polyaffine, and thus a Hessian null Lagrangian. Restricted to symmetric rank-one tensors, it vanishes precisely on those of the form $\pm a\otimes a$, since the cofactor matrix of such tensors is zero, and negative semi-definite in general, with a nonzero eigenvalue if the matrix does not have rank one.
Due to its quadratic nature, we have 
\begin{align*}
    D^2 q(A)[B,B]=2q(B)
\end{align*}
for all $A,B\in \Symthree$. \\
\textit{2.~SRC on a bounded set: }We now combine $f_\epsilon$ and $q$ to get the family of functions
$$g_\lambda=f_\epsilon+\lambda q,$$
where $\lambda\ge 0$. Let us set $R:=\sup_{x\in \mathbb T^3}\vert D^2\psi(x)\vert $. We claim that there is a $\lambda\ge 0$ such that $g_\lambda$ is SRC on $B(0,2R)\subset \Symthree.$
Let us define two closed subsets of the unit sphere 
\begin{align*}
    \mathbb X&:=\{B  \in \Symthree\mid \vert B \vert=1, \text{ and }B =a\odot b\text{ for some }a,b\in \R^3\},\\
    \mathbb X_0&:=\{B  \in \Symthree\mid \vert B \vert=1, \text{ and }B =\pm a\otimes a\text{ for some }a\in \R^3\}.
\end{align*}
Since for all $A\in\overline{B_{2R}}$ and $B \in\mathbb X_0$, there holds 
$$D^2f_\epsilon(A)[B ,B ]\ge 2\epsilon,$$
compactness and continuity yield a relatively open neighborhood
$\mathbb U$ of $\mathbb X_0$ in $\mathbb X$ such that for all $B\in \mathbb U$
$$D^2f_\epsilon(A)[B ,B ]\ge \epsilon.$$
Because $q\ge 0$ on $\mathbb X$, the same bound holds for
$g_\lambda$ for every $\lambda\ge 0$.

If $\mathbb U$ can be chosen to equal $\mathbb X$, then take $\lambda=0$. If $\mathbb X\setminus \mathbb U$ is nonempty, this set is closed by definition and bounded, hence the minimum 
$$C_3:=\min_{B  \in \mathbb X\setminus \mathbb U}q(B )>0.$$
is attained. 
Setting 
$$C_4:=\max\biggl\{0,-\min_{B  \in \mathbb X,A\in \overline{B(0,2R)}}D^2 f_\epsilon(A)[B ,B ]\biggr\},$$
which exists due to compactness, we can set 
\begin{align}
    \lambda :=\frac{C_4+\epsilon}{2M}. \label{eq: Def lambda}
\end{align}
For every $A\in \overline{B(0,2R)}$ and $B \in \mathbb X\setminus \mathbb U$, there holds
$$D^2 g_\lambda (A)[B ,B ]\ge D^2 f_\epsilon (A)[B ,B ]+\lambda 2q(B)\ge -C_4+2\lambda C_3\ge \epsilon.$$
From homogeneity, we infer the claim for all $H \in \Symthree$ that are of symmetric rank-one.\\
\textit{3.~A convex correction: }Let us choose a function $\zeta\in C^\infty([0,\infty),[0,1])$ such that $\zeta \equiv0$ on $[0,R]$ and $\zeta\equiv 1$ on $(\tfrac{3}{2}R,\infty)$. Let $\rho\colon [0,\infty)\to \R$ be its second antiderivative satisfying $\rho(0)=0$ and $\rho'(0)=0$. 
We define a convex function
$$h\colon \Symthree\to \R, \quad A\mapsto \rho(\vert A\vert).$$
We claim that $h$ is strongly convex in $\Symthree\setminus \overline{B(0,2R)}$. First, its derivative, writing $r:=\vert A\vert$, is given by
$$ Dh(A)[B]=\rho'(r)\frac{A:B}{r},$$
hence its second derivative is given by
\begin{align*}
    D^2h(A)[B,B]=\zeta(r)\frac{(A:B)^2}{r^2} + \frac{\rho'(r)}{r^3}\bigl(\vert B\vert^2r^2-(A:B)^2\bigl ).
\end{align*}
For $r\ge 2R$, it holds that
$$\frac{\rho'(r)}{r}=\tfrac{1}{r}\int_0^r\zeta (s)\dd s\ge \tfrac{1}{r}\int_{\tfrac{3R}{2}}^{r}\dd s=\frac{r-\tfrac{3R}{2}}{r}\ge \frac{1}{4}.$$
Moreover, 
$$\tfrac{1}{r}\int_0^r\zeta (s)\dd s\le  \tfrac{1}{r}\int_{R}^{r}\dd s=\frac{r-R}{r}\le 1,$$
and consequently
$$ D^2h(A)[B,B]\ge \frac{1}{4}\vert B\vert^2. \label{eq: h strongly convex}$$
\textit{4.~Global extension and failure of  HQC: }Let us now choose a smooth cutoff function $\chi\in C^\infty(\Symthree,[0,1])$ such that $\chi\equiv 1$ on $\overline{B(0,2R)}$ and $\chi\equiv 0$ on $\Symthree \setminus B(0,3R)$. Since $\chi f_\epsilon$ is $C^2$ and compactly supported, there is a constant $C_0\ge 0$ such that
$$D^2 (\chi f_\epsilon)(A)[B,B]\ge -C_0\vert B\vert^2$$
for every $A,B\in \Symthree$. 
Take $\nu:=4(C_0+\epsilon)$, 
and define
\begin{align*}
    \hat{f}=\chi f_\epsilon+\lambda q+\nu h.
\end{align*}
For every compatible direction $B$, we have
$$D^2\hat f(A)[B,B]\ge 
\begin{cases}
D^2g_\lambda(A)[B,B]
\ge \epsilon\vert B\vert^2,
& \vert A\vert\leq2R,\\
(-C_0+\nu/4)\vert B \vert^2
=\epsilon\vert B \vert^2,
& \vert A\vert\ge 2R.
\end{cases}$$
Here the first estimate uses $\chi=1$ and the convexity of $h$,
whereas the second uses $q(B)\ge 0$ and the lower bound
for $D^2h$.
It remains to show that $\hat{f}$ is not HQC at zero. By definition, $D^2\psi(x)\in \overline{B(0,R)}$ for all $x\in \mathbb T^3$. In particular, 
$$\chi(D^2\psi(x))=1, \quad \text{and} \quad h(D^2\psi(x))=0,$$
hence $\hat{f}(D^2\psi(x))=f_\epsilon(D^2\psi(x))+\lambda q(D^2\psi(x))$. Using the Hessian null Lagrangian property together with \eqref{eq: f eps not HQC}, we conclude
\begin{align*}
      \int_{\mathbb T^3} \hat{f}(D^2 \psi)-\hat{f}(0)\dd x &=   \int_{\mathbb T^3} f_\epsilon(D^2 \psi)+\lambda q(D^2 \psi)\dd x\\
      &= \int_{\mathbb T^3} f_\epsilon(D^2 \psi)\dd x< -\frac{C_1}{2}<0.
\end{align*}
As a final remark, we can extend the proof to arbitrary dimension by choosing the translator $q$ to be $-2$ times the trace of the second order minors matrix. 
\end{proof}

\section{SRC and HQC do not imply SQC}
\label{Sec: On the strong symmetric Morrey problem}

A \v Sverák-type subspace construction was first proposed by Pati (see \cite[Theorem~3]{pati2025calculus})
to construct a counterexample for the implication SRC $\implies$ SQC. The subspace $\mathbb L$ considered is defined via the map
    \begin{equation}
        S\colon \R^3\to \Symthree,\quad (x,y,z)\mapsto \begin{pmatrix}
        x & 0 & \tfrac{z}{2}\\
        0 & y & \tfrac{z}{2}\\
        \tfrac{z}{2}&\tfrac{z}{2} &0
    \end{pmatrix},\label{eq: Pati subspace}
    \end{equation}
through $\mathbb{L}:=S(\R^3)$. Note that we normalized $S$ to be an isometry w.r.t.~the Frobenius norm.
A matrix in $\mathbb{L}$ is compatible (i.e., having symmetrized rank-one) if and only if
    $$z=0, xy\le 0 \quad \text{or} \quad x+y=0. $$
    Indeed, $\det S(x,y,z)= -z^2\tfrac{x+y}{4}$ and $\tr  S(x,y,z)= x+y$, and for the symmetric rank-one property, either the matrix has ordinary rank one, or it has ordinary rank two and the two nonzero eigenvalues have opposite sign.

However, the characterization of compatible directions used
in \cite[Lemma~1]{pati2025calculus} omits rank-two symmetrized rank-one lines. 
The function $g$ discussed in the paper is not symmetric rank-one affine, but strictly concave along the symmetric rank-one line 
    $$\begin{pmatrix}
        0 & 0 & -1\\
        0&0 & -1\\
        -1&-1&0
    \end{pmatrix}+t\begin{pmatrix}
        1 & 0 & 0\\
        0&-1 & 0\\
        0&0&0
    \end{pmatrix}.$$
Keeping the subspace construction and the test function within this example, while replacing the cubic seed by a quartic, we can still construct an SRC integrand that is not SQC. 
Furthermore, this example can be made HQC, without enforcing SQC.

Before we present our solution, let us first explain briefly why a globally cubic polynomial cannot provide such a counterexample. The fact that every SRC quadratic polynomial is already SQC can be obtained directly from the non-symmetric counterpart established in \cite{van1947extension,van1949signe}, see also \cite{harutyunyan2015explicit,harutyunyan2017towards,zhang2002equality}. 
Let $p\colon\Symthree\to\mathbb R$ be a polynomial of degree
at most $3$, and write $p=p_3+p_2+p_1+p_0$,
where $p_i$ is homogeneous of degree $i$.
Let us assume that $p$ is SRC, meaning that for all $A\in \Symthree$, $B=a\odot b$, $a,b\in \R^3$, and $t\in \R$, there holds
$$0\le D^2p(tA)[B,B]=tD^2p_3(A)[B,B]+D^2p_2(A)[B,B]. $$
This can only hold if
$$D^2p_3(A)[B,B]=0, $$
in other words, if $p_3$ is symmetric rank-one affine. This, however, requires $p_3$ to be affine (cf.~\cite[Prop.~5.10]{Kreisbeck19}), and thus zero. 

Next, we show that there is a polynomial of degree four on the three-dimensional subspace considered by Pati, which gives rise to a SRC and HQC but not SQC function. 
\begin{lemma}\label{Lem: quartic Pati polynomial}
    There exists a quartic polynomial $p\colon \R^3\to \R$, such that 
    \begin{equation}\label{eq: polynomial is src}
        D^2p(X)[B,B]\ge 0
    \end{equation}
    holds for all $X\in \R^3$ and all $B=(a,b,c)$ admitting 
    $$c=0,\ ab\le 0 \quad \text{or} \quad a+b=0,$$
    and $p(0)=0$, while
    $$\int_{(0,2\pi)^3}p(\cos x,\cos y,\cos(x+y))\dd (x,y,z)<-\frac{3}{2}.$$
\end{lemma}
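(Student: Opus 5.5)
Our plan is to construct $p$ explicitly as a combination of a small number of structured quartic and quadratic terms. The free parameters are then tuned so that the directional convexity \eqref{eq: polynomial is src} holds along the two families of compatible directions, while the integral along the test curve stays negative. The test curve $X(x,y)=(\cos x,\cos y,\cos(x+y))$ is the Šverák/Pati-type curve: $S(X)$ is the symmetrized gradient of a trigonometric displacement in $\mathbb L$. So we first compute the moments $\int \cos^{i}x\cos^{j}y\cos^{k}(x+y)$ for $i+j+k\le 4$. The nonvanishing ones are few; for example $\int xyz=\tfrac14$ after normalizing by $(2\pi)^3$, $\int x^2=\tfrac12$, $\int x^4=\tfrac38$, $\int x^2y^2=\tfrac14$, and $\int x^2yz$ and $\int xy z^2$ are nonzero. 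We then write the target integral as a linear function of the coefficients of $p$.

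For the ansatz we take
$$p(x,y,z)=-\alpha\, xyz\,\ell(x,y,z)+\beta\,(x^2+y^2+z^2)+\gamma\,(x^4+y^4+z^4)+\kappa\,(x-y)^2z^2+\mu\,(x+y)^2\cdots,$$
where $\ell$ is linear, mimicking the cubic seed $xyz$ of Pati/Šverák but raised to degree four. We may also simply allow the general symmetric quartic in $(x,y)\leftrightarrow(y,x)$ with a handful of monomials. The symmetry $x\leftrightarrow y$ of both the compatible cone and the curve reduces the parameter count.

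We verify the convexity condition separately on the two cone components.

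On $B=(a,b,0)$ with $ab\le0$, we have $D^2p(X)[B,B]=p_{xx}a^2+2p_{xy}ab+p_{yy}b^2$. It suffices that $p_{xx},p_{yy}\ge0$ and that $p_{xy}$ has the right sign, or is dominated. Positivity of a quadratic form on the quadrant $ab\le 0$ amounts to a copositivity-type condition, which we check as a polynomial inequality in $X$.

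On $B=(a,-a,c)$, $D^2p(X)[B,B]$ is a quadratic form in $(a,c)$ whose coefficients are quadratic polynomials in $X$. We need its $2\times2$ coefficient matrix to be positive semidefinite for all $X$. This means two diagonal entries (quadratic in $X$) are nonnegative and a determinant (quartic in $X$) is nonnegative. We would certify each by exhibiting it as a sum of squares, or by completing squares by hand. The quadratic $\beta|X|^2$ and the quartic $\gamma\sum x^4$ supply a positive margin that absorbs indefinite cross terms, exactly as in the cubic-versus-quadratic balance of Šverák's construction, except that homogeneity of the dominant quartic part now has to be controlled too.

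The main obstacle is this simultaneous requirement. The quartic top-degree part $p_4$ must itself satisfy the cone-convexity (by the scaling argument given just before the lemma, applied to degree four), yet retain a large negative mean along the curve; only then can a quadratic term of bounded size fix the lower-order defects. The constant $-\tfrac32$ leaves a quantitative margin. We would first search numerically over the coefficients, minimizing the integral subject to SOS constraints on the two Hessian conditions. We would then round to rational coefficients and verify the certificates (diagonal positivity plus the determinant inequality) by explicit square completion. Finally, $p(0)=0$ is arranged by omitting the constant term.
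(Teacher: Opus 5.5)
Your plan has a genuine gap. The mechanism you single out as the main obstacle is impossible, so your ansatz can never produce a negative mean.

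You correctly observe that scaling $X\mapsto tX$ forces $D^2p_4(X)[B,B]\ge0$ for all $X$ and all compatible $B$. Evaluating at $X=0$ likewise forces $D^2p_2[B,B]\ge0$. But $e_1,e_2,e_3$ are all compatible directions, so $\partial_i^2p_4(e_j)\ge0$ for all $i,j$. Hence the coefficients of $x^4,y^4,z^4,x^2y^2,x^2z^2,y^2z^2$ in $p_4$ are nonnegative, and so are those of $x^2,y^2,z^2$ in $p_2$.

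The three components of the test curve carry the frequencies $\pm(1,0)$, $\pm(0,1)$, $\pm(1,1)$. So a monomial $x^iy^jz^k$ has nonzero mean along the curve if and only if $i\equiv j\equiv k \pmod 2$. In degree four these are exactly the six monomials just listed (means $\tfrac38$ and $\tfrac14$), in degree two only $x^2,y^2,z^2$, and in degree one none. Hence $\int p_4\ge0$, $\int p_2\ge0$ and $\int p_1=0$ for every admissible $p$.

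In particular, your seed $xyz\,\ell$ with $\ell$ linear has mean zero for every choice of $\ell$. Contrary to what you state, $\int x^2yz=\int xy^2z=\int xyz^2=0$. An ansatz built only from quartic and quadratic terms therefore always has nonnegative mean and cannot go below $-\tfrac32$.

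The missing idea is that the negativity must come from the cubic part. The only cubic monomial with nonzero mean is $xyz$ (mean $\tfrac14$). So any $p$ satisfying the convexity condition obeys $\int p\ge\tfrac14\,c_{xyz}$, and one needs $c_{xyz}<-6$.

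The quartic and quadratic parts play the opposite role to the one you assign them. They have nonnegative mean, and they must dominate the indefinite Hessian of the cubic, which is linear in $X$: the quartic part for large $|X|$, the quadratic part for small $|X|$. For fixed $X,B$, the map $t\mapsto D^2p(tX)[B,B]$ is a quadratic polynomial in $t$ whose discriminant has to be controlled.

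This is exactly the paper's construction:
\begin{itemize}
\item the seed is $-30xyz$, together with correcting cubic terms;
\item the quadratic part contains $-30xy$, so that $p_{xy}<0$ everywhere;
\item the quartic part has positive mean $\tfrac{15}{4}$.
\end{itemize}
Convexity on $\{c=0,\ ab\le0\}$ then follows from $p_{xx},p_{yy}>0>p_{xy}$ by completing squares. Convexity on $\{a+b=0\}$ follows from Sylvester's criterion for the $(d,z)$-Hessian in trace--deviatoric coordinates.

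Note also that even with a corrected ansatz, your proposal stops at a numerical search. For an existence statement of this kind, the explicit polynomial and its certificate are the proof.
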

\begin{proof}
    We take the polynomial
    \begin{align}\label{eq: quartic polynomial example}
        \begin{aligned}
p(x,y,z)=&
\frac15(x^2+y^2)+\frac7{10}z^2-30xy
-30xyz-\frac53(x^2+y^2)z+\frac32z^3\\
&+\frac23(x^4+y^4)-2(x^3y+xy^3)+3x^2y^2
+4(x-y)^2z^2+\frac43z^4.
\end{aligned}
    \end{align}
    Using trigonometric orthogonality, we obtain
    $$\int_{(0,2\pi)^3}p(\cos y_1,\cos y_2,\cos(y_1+y_2))\dd y=\frac{11}{20}-\frac{15}{2}+\frac{1}{2}+\frac{3}{4}+2+\frac{1}{2}=-\frac{16}{5}<-\frac{3}{2}.$$
Testing with directions $B=(a,b,0)$, where $ab\le 0$, amounts to only consider the partial Hessian w.r.t.~$(x,y)$. Differentiation gives 
\begin{align*}
        p_{xx}&
=
8\left(x-\frac34y\right)^2
+\frac32y^2
+8\left(z-\frac5{24}\right)^2
+\frac{19}{360}>0,\\
p_{yy}&
=
8\left(y-\frac34x\right)^2
+\frac32x^2
+8\left(z-\frac5{24}\right)^2
+\frac{19}{360}>0,\\
p_{xy}
&=-6(x-y)^2
-8\left(z+\frac{15}{8}\right)^2
-\frac{15}{8}<0,
\end{align*}hence
$$D^2p(x,y,z)[(a,b,0),(a,b,0)]
=p_{xx}a^2+2p_{xy}ab+p_{yy}b^2
\ge 0.$$
It remains to consider directions $B=(a,-a,c)$.Introducing trace-deviatoric coordinates $s=x+y$ and $d=x-y$. Then $p(x,y,z)=\hat{p}(s,d,z)$ and
\begin{align*}
    \hat{p}(s,d,z)&=-\frac{37}{5}s^2+\frac{38}{5}d^2+\frac7{10}z^2
-\frac{25}{3}s^2z+\frac{20}{3}d^2z+\frac32z^3\\
&+\frac1{48}s^4+\frac18s^2d^2+\frac{25}{48}d^4
+4d^2z^2+\frac43z^4.
\end{align*}
Along the symmetric rank-one lines with direction $(a,-a,c)$, the transformed
coordinates satisfy
$$s(t)=s,\qquad
d(t)=d+2at,\qquad
z(t)=z+ct.$$
Thus
\begin{equation*}
D^2p(x,y,z)[(a,-a,c),(a,-a,c)]
=
\begin{pmatrix}2a&c\end{pmatrix}
D^2_{(d,z)}\hat{p}(s,d,z)
\begin{pmatrix}2a\\c\end{pmatrix}.
\end{equation*}
Differentiation gives
$$D^2_{(d,z)}\hat{p}(s,d,z)=
\begin{pmatrix}
\alpha(z)+\frac{25}{4}d^2+\frac14s^2
&
d\left(\frac{40}{3}+16z\right)
\\
d\left(\frac{40}{3}+16z\right)
&
\gamma(z)+8d^2
\end{pmatrix}$$
where
\begin{align*}
\alpha(z)
&=8\left(z+\frac56\right)^2+\frac{434}{45}>0,
\\
\gamma(z)
&=16\left(z+\frac9{32}\right)^2+\frac{43}{320}>0.
\end{align*}
For positive semi-definiteness, it suffices to consider the case $s=0$ and show $\det D^2_{(d,z)}\hat{p}(0,d,z)\ge 0$ by Sylvester's criterion. This expands as
\begin{align*}
  \det D^2_{(d,z)}\hat{p}(0,d,z)
&=
\left(\alpha(z)+\frac{25}{4}d^2\right)
\left(\gamma(z)+8d^2\right)
-d^2\left(\frac{40}{3}+16z\right)^2\\
&=50d^4+\beta(z)d^2+\alpha(z)\gamma(z),
\end{align*}
where 
$$\beta(z)=-92z^2-\frac{1055}{4}z-\frac{8537}{180}.$$
Note that $\beta$ is concave and $\beta(-2),\beta(-\tfrac{1}{5})>0$. Let us therefore consider the four regions for $z$
    $$(-\infty,-2), \quad [-2,-\tfrac{1}{5}],  \quad (-\tfrac{1}{5},0], \quad (0,\infty).$$
    Within $[-2,-\tfrac{1}{5}]$ the term $\beta(z)$ is positive, hence the determinant is non-negative. For the other three strips, we rewrite 
    \begin{align*}
  \det D^2_{(d,z)}\hat{p}(0,d,z)
&=50\left(d^2+\frac{\beta(z)}{100}\right)^2
+\frac{\varrho(z)}{200}.
\end{align*}It suffices to show that
\begin{align*}
 \varrho(z)
 &:=
 200A(z)\gamma(z)-\beta(z)^2\\
 &=
 17136z^4+\frac{25610}{3}z^3
 -\frac{2455757}{720}z^2
 +\frac{437413}{72}z
 +\frac{65014031}{32400}\ge 0.
\end{align*}To do so, we take the coefficient bounds
\begin{align*}
    17136&>17000,\quad
0<\frac{25610}{3}<8600, \quad 0<\frac{2455757}{720}<3500,\\ 
&0<\frac{437413}{72}<6100,\quad
\frac{65014031}{32400}>2000,
\end{align*}
and make the case distinction
\begin{align*}
z\ge0:\quad
\varrho(z)&\ge17000z^4-3500z^2+2000\\
&\ge2000-\frac{3500^2}{4\cdot17000}>0,
\\
-\tfrac15\le z\le0:\quad
\varrho(z)&\ge2000-\frac{8600}{125}
-\frac{3500}{25}-\frac{6100}{5}
=\frac{2856}{5}>0,
\\
z<-2,\quad w=-z:\quad
\varrho(z)&\ge
\left(17000-4300-875-\frac{6100}{8}\right)w^4+2000\\
&=\frac{22125}{2}w^4+2000>0.
\end{align*}
\end{proof}

We now localize this polynomial and extend the result to a function that is SRC, analogous to the procedure in Thm.~\ref{Thm: Scheme for transfering RCS to SRC}. In order to prove HQC, we use the heat-flow argument recently considered in 
\cite{bruno2026quasiconvexitydacorognamarcellinienergy}. 

\begin{proof}[Proof of Thm.~\ref{Thm: No SQC}]
Let $\mathbb L$ be as in \eqref{eq: Pati subspace}.
    Let $P$ be the orthogonal projection onto $\mathbb{L}$ and $Q:=\id-P$. Furthermore, let us take $u\in C^\infty(\mathbb T^3,\R^3)$
    $$u(x,y,z)=\frac{1}{2\pi}\bigl(\sin(2\pi x),\sin(2\pi y),\sin(2\pi(x+y))\bigr)^T.$$
    Then
    $$\varepsilon(u)=S\bigl(\cos(2\pi x),\cos(2\pi y),\cos(2\pi(x+y))\bigr),$$
    and, by identifying $\mathbb{L}\cong \R^3$, as well as using $p$ from Lem.~\ref{Lem: quartic Pati polynomial},
    $$\int_{\mathbb T^3}p(\varepsilon(u))\dd x<-\frac{3}{2}, \quad \int_{\mathbb T^3}\vert \varepsilon(u)\vert^2\dd x=\frac{3}{2}, \quad \Vert \varepsilon(u)\Vert_{L^\infty}=\sqrt{3}. $$
\textit{1.~Localization and extension: }Reusing the localization from Thm.~\ref{Thm: Scheme for transfering RCS to SRC}, we choose $\zeta\in C^\infty([0,\infty),[0,1])$ such that $\zeta \equiv0$ on $[0,2]$ and $\zeta\equiv 1$ on $[\tfrac{5}{2},\infty)$. Let $\rho\colon [0,\infty)\to \R$ be its second antiderivative admitting $\rho(0)=0$ and $\rho'(0)=0$. 
We define a radially symmetric 
$h\colon \mathbb{L}\to \R, \ A\mapsto \rho(\vert A\vert)$, and 
one can show analogously that 
$$D^2 h(A)[B,B] \ge \frac{1}{6}\vert B\vert^2$$
holds on $\mathbb{L}\setminus \overline{B(0,3)}$. Moreover, by choosing a smooth cutoff function 
 $\chi\in C^\infty(\mathbb{L},[0,1])$ such that $\chi\equiv 1$ on $\overline{B(0,3)}$ and $\chi\equiv 0$ on $\mathbb{L} \setminus B(0,4)$, we can find a constant $C_0\ge 0$ such that
$$D^2 (\chi p)(A)[B,B]\ge -C_0\vert B\vert^2$$
for every $A,B\in \mathbb{L}$. 
We then define 
$$g(A):=\chi(A)p(A) + \vert A\vert^2 + 6C_0h(A) $$
    for $A\in \mathbb{L}$. Thus, $g$ satisfies
    \begin{align}\label{eq: strong src for g on L}
         D^2g(A)[B,B]\ge2\vert B\vert^2
    \end{align}for all $A,B\in \mathbb{L}$, $B$ being compatible. In particular, $g$ is SRC in $\mathbb{L}$, and
    \begin{equation}
        g(0)=0 \quad \text{and}\quad \int_{\mathbb T^3}g(\varepsilon(u))\dd x<-\frac{3}{2}+\frac{3}{2}=0. \label{eq: g localization of p}
    \end{equation}
Set
$$C_g:=\sup_{A\in \mathbb{L}}\sup_{B\in \mathbb{L}\setminus\{0\}}\frac{\vert D^2g(A)[B,B]\vert }{\vert B\vert^2},$$
    which is finite since $g$ has at most quadratic growth and bounded Hessian outside of $B(0,3)$. 
 Let $\mathbb X$ be the set of compatible unit-norm matrices. By continuity of the distance and compactness of $\mathbb X$, there exists a $\delta>0$ such that for all $B\in \mathbb X$ admitting $\vert QB\vert< \delta $. It follows that 
 $$\operatorname{dist}(PB,\mathbb X\cap \mathbb{L})<\frac{1}{2C_g}.$$
  In other words, for every such $B$ there is a $B_0\in \mathbb X\cap \mathbb{L}$ such that
  $\vert PB-B_0\vert \le \tfrac{1}{2C_g}$. Thus, for every $A\in \mathbb{L}$, we infer by polarization, $\vert PB\vert\le 1$, and \eqref{eq: strong src for g on L}
  \begin{align*}
      D^2g(A)[PB,PB]\ge D^2g(A)[B_0,B_0]-C_g\vert PB-B_0\vert(\vert PB\vert+\vert B_0\vert)\ge 1.
  \end{align*}
  Let us now define the extension $f_3$ of $g$ via
$$f_3\colon \Symthree\to \R, \ A\mapsto g(PA)+\lambda \vert QA\vert^2,$$
  where $\lambda\ge \tfrac{C_g+1}{2\delta^2}$. Then, for every $A\in \Symthree$ and every $B\in \mathbb X$, $\vert QB\vert\ge \delta$, there holds
  $$D^2f_3(A)[B,B]\ge -C_g+2\lambda \delta^2\ge 1.$$
Therefore, $f_3$ is SRC. 
    Note that $\varepsilon(u)(x)\in \mathbb{L}$ for all $x\in \mathbb T^3$, thus $f_3$ is not SQC at the origin. It remains to show that $f_3$ is HQV.\\
\textit{2.~Heat-flow argument: }Let $A\in \Symthree$ and $\psi\in C^\infty_{\mathrm{per}}(\mathbb T^3,\R)$ be given. Let $\psi_t:=e^{t\Delta}\psi$, i.e., $\psi_t$ is a solution for the heat equation
$$\partial_t \psi_t=\Delta \psi_t, \quad \psi_0=\psi.$$
    Set $H_t:=A+D^2\psi_t$, then $\partial_tH_t=\Delta H_t$ componentwise, and
    \begin{align*}
        \frac{d}{dt}\int_{\mathbb T^3}f_3(H_t)\dd x &=\int_{\mathbb T^3}Df_3(H_t):\Delta H_t\dd x \\
        &=-\int_{\mathbb T^3}\sum_{k=1}^3D^2f_3(H_t)[\partial_kH_t,\partial_kH_t]\dd x.
    \end{align*}
If we can show that the last integral is nonnegative, then $t\mapsto\int_{\mathbb T^3}f_3(H_t)\dd x$ is nonincreasing
w.r.t.~$t$. For a smooth periodic $\psi$, the Fourier representation of the periodic heat semigroup
(cf.~\cite[pp.~161–165]{hunter2001applied}), gives
$$
D^2e^{t\Delta}\psi\longrightarrow0
\quad\text{uniformly as }t\to\infty,$$
and
$$D^2e^{t\Delta}\psi\longrightarrow D^2\psi
\quad\text{uniformly as }t\downarrow0.$$
Thus, we conclude by the continuity of $f_3$ that  
$$\int_{\mathbb T^3}f_3(H_t)\dd x\overset{t\to \infty}{\longrightarrow} f_3(A) , \quad \int_{\mathbb T^3}f_3(H_t)\dd x\overset{t\to 0}{\longrightarrow}   \int_{\mathbb T^3}f_3(A+D^2\psi)\dd x.$$
In particular
$$ \int_{\mathbb T^3}f_3(A+D^2\psi)\dd x \ge f_3(A).$$
Lastly, it suffices to show that for the dissipation there holds 
$$\sum_{k=1}^3D^2f_3(H_t)[\partial_kH_t,\partial_kH_t]\ge 0. $$
\textit{3.~Nonnegative dissipation: }Let us write $X_{ijk}:=\partial_i\partial_j\partial_kv_t$, which is fully symmetric. We can find a $z_k\in \R^3$ such that $P (\partial_k H_t)=S(z_k)$ via
$$z_k:=(X_{11k},X_{22k},X_{13k}+X_{23k}).$$
Isolating the diagonal rank-one entries
$$d_1=(X_{111},0,0), \quad d_2=(0,X_{222},0), \quad d_3=0,$$
we can control the remainder terms $r_k=z_k-d_k$, exploiting the full symmetry of $X_{ijk}$, by the complementary projections $Q\partial_k H_t$. 
More precisely, if $Q\partial_k H_t=0$ for every $k$, then
$$X_{12k}=X_{33k}=0,\qquad X_{13k}=X_{23k}.$$
Full symmetry of $X$ gives
$$X_{112}=X_{122}=X_{123}
=X_{133}=X_{233}=X_{333}=0,$$
and the remaining identities imply $X_{113}=X_{223}=0.$
By linearity of the operators $(Q\partial_kH_t)_k$ and $(r_k)_k$, there exists a constant $C_Q>0$ such that
$$\sum_{k=1}^3 \vert r_k\vert^2\le C_Q\sum_{k=1}^3 \vert Q\partial_kH_t\vert^2. $$
Thus, by abuse of notation, we expand
\begin{align*}
   \sum_{k=1}^3 D^2f_3(H)[\partial_kH_t,\partial_kH_t]&=  \sum_{k=1}^3 D^2g(PH_t)[P\partial_kH_t,P\partial_kH_t]+ 2\lambda \sum_{k=1}^3 \vert Q\partial_kH_t\vert^2\\
   &= \sum_{k=1}^3 D^2g(PH_t)[z_k,z_k]+ 2\lambda \sum_{k=1}^3 \vert Q\partial_kH_t\vert^2.
\end{align*}
Bilinearity and Young's inequality together with \eqref{eq: strong src for g on L} guarantee
\begin{align*}
     D^2g(PH_t)[z_k,z_k]&= D^2g(PH_t)[d_k,d_k]+  D^2g(PH_t)[r_k,r_k]+2  D^2g(PH_t)[d_k,r_k]\\
    &\ge 2\vert d_k\vert^2-C_g\vert r_k\vert^2-C_g^2\vert r_k\vert^2-\vert d_k\vert^2\\
    &\ge \vert d_k\vert^2-(C_g+C_g^2)\vert r_k\vert^2.
\end{align*}
If $\lambda \ge \tfrac{C_Q}{2}(C_g+C_g^2)$, we conclude
$$\sum_{k=1}^3 D^2f_3(H_t)[\partial_kH_t,\partial_kH_t]\ge \sum_{k=1}^3\biggl(\vert d_k\vert^2+(2\lambda -C_Q(C_g+C_g^2))\vert Q\partial_kH_t\vert^2 \biggr)\ge 0.$$
In total, the claim follows by choosing
$$\lambda \ge \max\biggl\{ \tfrac{C_g+1}{2\delta^2} ,\tfrac{C_Q}{2}(C_g+C_g^2)\biggr\}.$$ 
\end{proof}

\section{Outlook}

We obtained three examples that disprove the general respective implications.
Furthermore, remark that our counterexample (Thm.~\ref{Thm: SQC no PCS}) and transfer scheme (Thm.~\ref{Thm: Scheme for transfering RCS to SRC}) are valid in any dimension. In particular, our diagram in Fig.~\ref{Fig 1: Diagram Semi-conveixty} holds for any dimension $n\ge 3$.

Concerning the two-dimensional Morrey problem, we note that our dimension-independent transfer scheme shows that a smooth integrand on $\Symtwo$ that is rank-one convex at symmetric matrices and fails to be Hessian quasiconvex,
would yield a symmetric rank-one convex integrand failing Hessian quasiconvexity, and therefore also failing symmetric quasiconvexity.
Composing the latter with the projection $\pi_S$ would provide
a counterexample to the classical Morrey problem on
$\mathbb R^{2\times2}$.
 It might therefore be worthwhile to investigate this restricted version of Morrey's problem in order to gain further insight into the classical one.

\printbibliography[heading=bibintoc, title={References}]

\end{document}